\title[Derived equivalences of posets of cluster tilting objects]
{Universal derived equivalences of posets of cluster tilting objects}
\author{Sefi Ladkani}
\address{Einstein Institute of Mathematics, The Hebrew University of Jerusalem, Jerusalem 91904, Israel}
\email{sefil@math.huji.ac.il}
\DeclareMathOperator{\fac}{fac}
\DeclareMathOperator{\add}{add}
\DeclareMathOperator{\ind}{ind}
\DeclareMathOperator{\rep}{rep}
\DeclareMathOperator{\Hom}{Hom}
\DeclareMathOperator{\img}{Im}
\DeclareMathOperator{\Ext}{Ext}
\DeclareMathOperator{\coker}{coker}
\DeclareMathOperator{\supp}{supp}
\newcommand{\wh}{\widehat}
\newcommand{\Fp}{F^{+}}
\newcommand{\Fm}{F^{-}}
\newcommand{\ju}{j^{-1}}
\newcommand{\jls}{j_{!}}
\newcommand{\Qx}{Q \setminus \{x\}}
\newcommand{\vphi}{\varphi}
\newcommand{\cA}{\mathcal{A}}
\newcommand{\cC}{\mathcal{C}}
\newcommand{\cD}{\mathcal{D}}
\newcommand{\cT}{\mathcal{T}}
\newcommand{\cX}{\mathcal{X}}
\theoremstyle{plain}
\newtheorem{theorem}{Theorem}
\newtheorem{prop}[theorem]{Proposition}
\newtheorem{lemma}[theorem]{Lemma}
\newtheorem{cor}[theorem]{Corollary}
\numberwithin{theorem}{section} \numberwithin{equation}{section}
\begin{document}

\begin{abstract}
We show that for two quivers without oriented cycles related by a BGP
reflection, the posets of their cluster tilting objects are related by
a simple combinatorial construction, which we call a flip-flop.

We deduce that the posets of cluster tilting objects of derived
equivalent path algebras of quivers without oriented cycles are
universally derived equivalent. In particular, all Cambrian lattices
corresponding to the various orientations of the same Dynkin diagram
are universally derived equivalent.
\end{abstract}

\maketitle

\section{Introduction}

In this note we investigate the combinatorial relations between the
posets of cluster tilting objects of derived equivalent path algebras,
continuing our work~\cite{Ladkani07t} on the posets of tilting modules
of such algebras.

Throughout this note, we fix an algebraically closed field $k$. Let $Q$
be a finite quiver without oriented cycles and let $\rep Q$ denote the
category of finite dimensional representations of $Q$ over $k$. The
associated cluster category $\cC_Q$ was introduced in~\cite{BMRRT06}
(and in~\cite{CCS06} for the $A_n$ case) as a representation theoretic
approach to the cluster algebras introduced and studied by Fomin and
Zelevinsky \cite{FominZelevinsky02}. It is defined as the orbit
category~\cite{Keller05} of the bounded derived category $\cD^b(Q)$ of
$\rep Q$ by the functor $S \cdot [-2]$ where $S : \cD^b(Q) \to
\cD^b(Q)$ is the Serre functor and $[1]$ is the suspension. The
indecomposables of $\cC_Q$ can be represented by the indecomposables of
$\cD^b(Q)$ in the fundamental domain of $S \cdot [-2]$, hence $\ind
\cC_Q = \ind \rep Q \cup \left\{ P_y[1] \,:\, y \in Q \right\}$ where
$P_y$ are the indecomposable projectives in $\rep Q$.

Cluster tilting theory was investigated in~\cite{BMRRT06}. A basic
object $T \in \cC_Q$ is a \emph{cluster tilting object} if
$\Ext^1_{\cC_Q}(T,T) = 0$ and $T$ is maximal with respect to this
property, or equivalently, the number of indecomposable summands of $T$
equals the number of vertices of $Q$. If $T = M \oplus U$ is cluster
tilting and $M$ is indecomposable, then there exist a unique
indecomposable $M' \neq M$ such that $T' = M' \oplus U$ is cluster
tilting. $T'$ is called the \emph{mutation} of $T$ with respect to $M$.

Denote by $\cT_{\cC_Q}$ the set of all cluster tilting objects.
In~\cite{IngallsThomas06}, a partial order on $\cT_{\cC_Q}$, extending
the partial order on tilting modules introduced
in~\cite{RiedtmannSchofield91}, is defined by $T \leq T'$ if $\fac T
\supseteq \fac T'$. Here, for $M \in \rep Q$, $\fac M$ denotes the full
subcategory of $\rep Q$ consisting of all quotients of finite sums of
copies of $M$, and for $T \in \cT_{\cC_Q}$, $\fac T = \fac \wh{T}$
where $\wh{T} \in \rep Q$ is the sum of all indecomposable summands of
$T$ which are not shifted projectives.

As shown in~\cite{IngallsThomas06}, the map $T \mapsto \fac T$ induces
an order preserving bijection between $(\cT_{\cC_Q}, \leq)$ and the set
of finitely generated torsion classes in $\rep Q$ ordered by reverse
inclusion. Moreover, it is also shown that when $Q$ is Dynkin,
$(\cT_{\cC_Q}, \leq)$ is isomorphic to the corresponding Cambrian
lattice defined in~\cite{Reading06} as a certain lattice quotient of
the weak order on the Coxeter group associated with $Q$.

\medskip

For two partially ordered sets $(X, \leq_X)$, $(Y, \leq_Y)$ and an
order preserving function $f : X \to Y$, define two partial orders
$\leq^f_+$ and $\leq^f_-$ on the disjoint union $X \sqcup Y$ by keeping
the original partial orders inside $X$ and $Y$ and setting
\begin{align*}
x \leq^f_{+} y \Longleftrightarrow f(x) \leq_Y y && y \leq^f_{-} x
\Longleftrightarrow y \leq_Y f(x)
\end{align*}
with no other additional order relations. We say that two posets $Z$
and $Z'$ are related via a \emph{flip-flop} if there exist $X$, $Y$ and
$f: X \to Y$ as above such that $Z \simeq (X \sqcup Y, \leq^f_{+})$ and
$Z' \simeq (X \sqcup Y, \leq^f_{-})$.

Let $x$ be a sink of $Q$ and let $Q'$ be the quiver obtained from $Q$
by a BGP reflection~\cite{BGP73} at $x$, that is, by reverting all the
arrows ending at $x$. Our main result is the following.

\begin{theorem} \label{t:clff}
The posets $\cT_{\cC_Q}$ and $\cT_{\cC_{Q'}}$ are related via a
flip-flop.
\end{theorem}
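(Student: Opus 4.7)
The plan is to work throughout at the level of torsion classes. By the Ingalls--Thomas correspondence, $\cT_{\cC_Q}$ is identified with the poset of finitely generated torsion classes of $\rep Q$ under reverse inclusion, and similarly for $\cT_{\cC_{Q'}}$. Partition $\cT_{\cC_Q} = A_Q \sqcup B_Q$ where $A_Q$ is the set of torsion classes containing $S_x$ and $B_Q$ of those not containing $S_x$, and analogously $\cT_{\cC_{Q'}} = A_{Q'} \sqcup B_{Q'}$. Since $x$ is a sink of $Q$, $S_x = P_x$ is simple projective, so $S_x^\perp = \{M \in \rep Q : \Hom(M,S_x) = 0\}$ is closed under quotients, extensions and direct sums; it is thus the largest torsion class not containing $S_x$, and $B_Q$ coincides with the poset of torsion subclasses of $S_x^\perp$. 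Dually, since $x$ is a source of $Q'$, $S_x$ is simple injective in $\rep Q'$, so $\add S_x$ is the smallest torsion class containing $S_x$, and $A_{Q'}$ coincides with the poset of torsion classes of $\rep Q'$ containing $\add S_x$.

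Define $f_Q : A_Q \to B_Q$ by $f_Q(\cX) = \cX \cap S_x^\perp$. This is a torsion class (intersection of two), automatically lies in $B_Q$, and is the largest torsion subclass of $\cX$ avoiding $S_x$. The presentation $\cT_{\cC_Q} \simeq (A_Q \sqcup B_Q, \leq^{f_Q}_+)$ follows at once: within-part orders are inherited; a cross relation $\cX \leq \cX'$ with $\cX \in A_Q$, $\cX' \in B_Q$ means $\cX \supseteq \cX'$, which using $\cX' \subseteq S_x^\perp$ is equivalent to $\cX' \subseteq \cX \cap S_x^\perp = f_Q(\cX)$, i.e.\ $f_Q(\cX) \leq \cX'$ in $B_Q$; and no reverse cross relation can occur, as it would force $S_x \in \cX'$. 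Dually, set $g_{Q'} : B_{Q'} \to A_{Q'}$ by $g_{Q'}(\cX') = \cX' \vee \add S_x$, the smallest torsion class containing both $\cX'$ and $S_x$; a parallel argument gives $\cT_{\cC_{Q'}} \simeq (B_{Q'} \sqcup A_{Q'}, \leq^{g_{Q'}}_-)$, with $B_{Q'}$ and $A_{Q'}$ playing the roles of $X$ and $Y$ from the flip-flop definition.

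To unify the two sides into a single flip-flop, it remains to produce order-preserving bijections $A_Q \simeq B_{Q'}$ and $B_Q \simeq A_{Q'}$ that transport $f_Q$ to $g_{Q'}$; setting $X := A_Q \simeq B_{Q'}$, $Y := B_Q \simeq A_{Q'}$ and $f : X \to Y$ the resulting common function then yields both $\cT_{\cC_Q} \simeq (X \sqcup Y, \leq^f_+)$ and $\cT_{\cC_{Q'}} \simeq (X \sqcup Y, \leq^f_-)$. These bijections should come from the BGP reflection functor $F^+ : \rep Q \to \rep Q'$, which annihilates $S_x$ and is an equivalence on the remaining indecomposables; the role swap between the two partitions is natural since $S_x$ is projective on the $Q$-side and injective on the $Q'$-side. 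The main obstacle, and the bulk of the technical work, is precisely this last step: making the $F^+$-induced bijections on torsion classes explicit, verifying preservation of the within-part orders, and checking that intersection with $S_x^\perp$ on one side is transported exactly to join with $\add S_x$ on the other.
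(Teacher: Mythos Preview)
Your outline follows the same architecture as the paper: the partition of $\cT_{\cC_Q}$ into cluster tilting objects with or without $S_x$ corresponds (via Ingalls--Thomas) exactly to your $A_Q \sqcup B_Q$, the paper's mutation map $f$ agrees with your $f_Q(\cX)=\cX\cap S_x^{\perp}$ by Lemma~\ref{l:facfT}, and on the $Q'$ side your $g_{Q'}(\cX')=\cX'\vee\add S_x$ agrees with the paper's mutation map $g$ by Corollary~\ref{c:gTleuniq}. Your lattice-theoretic derivation of the $\leq_+$ and $\leq_-$ presentations is slicker than the paper's sequence of lemmas, but it hides one point you should address: you need $f_Q(\cX)$ and $g_{Q'}(\cX')$ to land in the poset of \emph{finitely generated} torsion classes, and intersections or joins of such classes are not automatically finitely generated for non-Dynkin $Q$. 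In the present situation this is true --- writing $T=S_x\oplus U$ one checks, using that $S_x$ is simple projective so every extension by $S_x$ splits, that $\cX\cap S_x^{\perp}=\fac U$, which is the torsion class of the mutated cluster tilting object --- but this is essentially the content of Lemma~\ref{l:facfT} and should be made explicit.

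The genuine gap is the last step, which you correctly flag but do not carry out: producing order isomorphisms $A_Q\simeq B_{Q'}$ and $B_Q\simeq A_{Q'}$ via $F^{+}$ that intertwine $f_Q$ with $g_{Q'}$. This is precisely where the paper spends most of its effort (Propositions~\ref{p:rhocTx} and~\ref{p:rhocTxcomp} and Corollary~\ref{c:clcomm}). One direction of each isomorphism is easy because $F^{-}$ is right exact (Lemma~\ref{l:rhorder}), but the other direction requires controlling how $F^{+}$ interacts with $\fac$, and this is delicate: $F^{+}$ is only left exact, so a surjection $U^n\twoheadrightarrow U'$ need not stay surjective after applying $F^{+}$. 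The paper handles this by separate arguments on $\cT^x_{\cC_Q}$ (using $\Ext^1_Q(U',S_x)=0$ to force the kernel to avoid $S_x$, whence $R^1F^{+}$ vanishes) and on its complement (splitting into the sincere and non-sincere cases and using that $\fac(S'_x\oplus F^{+}\widehat{T})$ is closed under extensions). Your proposal gives no indication of how to navigate these issues, and the phrase ``should come from the BGP reflection functor'' is not a proof; as it stands, the proposal is a correct plan whose hardest third remains to be executed.
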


We give a brief outline of the proof. Let $\cT^x_{\cC_Q}$ denote the
subset of cluster tilting objects containing the simple projective
$S_x$ at $x$ as direct summand. Given $T \in \cT^x_{\cC_Q}$, let $f(T)$
be the mutation of $T$ with respect to $S_x$. In
Section~\ref{sec:clsink} we prove that the function $f : \cT^x_{\cC_Q}
\to \cT_{\cC_Q} \setminus \cT^x_{\cC_Q}$ is order preserving and
moreover
\begin{equation} \label{e:ffp}
\cT_{\cC_Q} \simeq \bigl( \cT^x_{\cC_Q} \sqcup (\cT_{\cC_Q} \setminus
\cT^x_{\cC_Q}), \leq^f_{+} \bigr)
\end{equation}

Similarly, let $\cT^{x[1]}_{\cC_{Q'}}$ be the subset of cluster tilting
objects containing the shifted projective $P'_x[1]$ at $x$ as direct
summand. Given $T \in \cT^{x[1]}_{\cC_{Q'}}$, let $g(T)$ be the
mutation of $T$ with respect to $P'_x[1]$. In
Section~\ref{sec:clsource} we prove that the function $g :
\cT^{x[1]}_{\cC_{Q'}} \to \cT_{\cC_{Q'}} \setminus
\cT^{x[1]}_{\cC_{Q'}}$ is order preserving and moreover
\begin{equation} \label{e:ffm}
\cT_{\cC_{Q'}} \simeq \bigl( \cT^{x[1]}_{\cC_{Q'}} \sqcup
(\cT_{\cC_{Q'}} \setminus \cT^{x[1]}_{\cC_{Q'}}), \leq^g_{-} \bigr)
\end{equation}

In Section~\ref{sec:clBGP} we relate the two isomorphisms given
in~\eqref{e:ffp} and~\eqref{e:ffm} by considering,
following~\cite{Zhu07}, the action of the BGP reflection functor on the
cluster tilting objects. We prove the existence of the following
commutative diagram with horizontal isomorphisms of posets
\[
\xymatrix{ \cT^x_{\cC_Q} \ar[r]^{\simeq} \ar[d]^{f} &
\cT^{x[1]}_{\cC_{Q'}} \ar[d]^{g} \\
\cT_{\cC_Q} \setminus \cT^x_{\cC_Q} \ar[r]^{\simeq} & \cT_{\cC_{Q'}}
\setminus \cT^{x[1]}_{\cC_{Q'}} }
\]
from which we deduce Theorem~\ref{t:clff}. An example demonstrating the
theorem and its proof is given in Section~\ref{sec:clexample}.

\medskip

In our previous work~\cite{Ladkani07t}, we have shown a result
analogous to Theorem~\ref{t:clff} for the posets $\cT_Q$ and $\cT_{Q'}$
of tilting modules, following a similar strategy of proof. However,
there are some important differences.

First, the situation in the cluster tilting case is asymmetric, as the
partition~\eqref{e:ffp} for a sink involves the subset of cluster
tilting objects containing the corresponding simple, while the
corresponding partition of~\eqref{e:ffm} at a source involves the
subset of cluster tilting objects containing the shifted projective. In
contrast, both partitions for the tilting case involve the subset of
tilting modules containing the simple, either at a source or sink. This
asymmetry is inherent in the proof of~\eqref{e:ffm}, which is not the
dual of that of~\eqref{e:ffp}, and also in the analysis of the effect
of the BGP reflection functor.

Second, in the cluster tilting case, the order preserving maps
occurring in the flip-flop construction are from the set containing the
simple (or shifted projective) to its complement, while in the tilting
case, they are in the opposite direction, into the set containing the
simple. As a consequence, a partition with respect to a sink in the
cluster tilting case yields an order of the form $\leq_{+}$, while for
the tilting case it gives $\leq_{-}$.

\medskip

While two posets $Z$ and $Z'$ related via a flip-flop are in general
not isomorphic, they are \emph{universally derived equivalent} in the
following sense; for any abelian category $\cA$, the derived categories
of the categories of functors $Z \to \cA$ and $Z' \to \cA$ are
equivalent as triangulated categories, see~\cite{Ladkani07u}.

It is known~\cite[(I.5.7)]{Happel88} that the path algebras of two
quivers $Q$, $Q'$ without oriented cycles are derived equivalent if and
only if $Q'$ can be obtained from $Q$ by a sequence of BGP reflections
(at sources or sinks). We therefore deduce the following theorem.

\begin{theorem} \label{t:clunider}
Let $Q$ and $Q'$ be two quivers without oriented cycles whose path
algebras are derived equivalent. Then the posets $\cT_{\cC_Q}$ and
$\cT_{\cC_{Q'}}$ are universally derived equivalent.
\end{theorem}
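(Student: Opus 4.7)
The plan is to combine Theorem~\ref{t:clff} with Happel's characterization of derived equivalence for acyclic path algebras and the fact, recalled above from \cite{Ladkani07u}, that posets related via a flip-flop are universally derived equivalent. First I would invoke Happel's theorem cited in the preceding paragraph to replace the assumed derived equivalence of $kQ$ and $kQ'$ by a finite sequence
\[
Q = Q_0, \, Q_1, \, \dots, \, Q_n = Q'
\]
of acyclic quivers in which each $Q_{i+1}$ is obtained from $Q_i$ by a BGP reflection at a sink or a source. The main task is then to show that for every $i$ the posets $\cT_{\cC_{Q_i}}$ and $\cT_{\cC_{Q_{i+1}}}$ are related via a flip-flop; once this is established, the theorem will follow by applying the universal derived equivalence of \cite{Ladkani07u} to each consecutive pair and composing.

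Next I would handle each step as follows. For a reflection at a sink, the flip-flop relation is exactly the content of Theorem~\ref{t:clff}. For a reflection at a source $x$ of $Q_i$, I would note that $x$ becomes a sink of $Q_{i+1}$ and that reflecting $Q_{i+1}$ at this sink recovers $Q_i$; hence Theorem~\ref{t:clff} applied to the pair $(Q_{i+1}, Q_i)$ produces a flip-flop between $\cT_{\cC_{Q_{i+1}}}$ and $\cT_{\cC_{Q_i}}$. The flip-flop relation is manifestly symmetric in $Z$ and $Z'$, since exchanging $\leq^f_{+}$ and $\leq^f_{-}$ on the common underlying set $X \sqcup Y$ merely swaps the two posets appearing in its definition; this yields the claim in the source case as well.

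Finally, composing the resulting chain of triangulated equivalences between the derived categories of the functor categories $\cT_{\cC_{Q_i}} \to \cA$, for $i = 0, 1, \dots, n$ and an arbitrary fixed abelian category $\cA$, will give the desired universal derived equivalence between $\cT_{\cC_Q}$ and $\cT_{\cC_{Q'}}$. The only mildly delicate point in this argument is the sink/source asymmetry in the hypothesis of Theorem~\ref{t:clff}, resolved above via the symmetry of the flip-flop relation itself; otherwise the proof is a short bookkeeping step on top of Theorem~\ref{t:clff} and the input from \cite{Ladkani07u}, so I do not anticipate any further obstacle.
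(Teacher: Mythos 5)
Your argument is essentially the paper's own proof, which is left implicit in the two paragraphs preceding the theorem: Happel's criterion reduces the statement to a chain of BGP reflections, Theorem~\ref{t:clff} handles each link, and the result of \cite{Ladkani07u} converts each flip-flop into a universal derived equivalence, which one composes along the chain. The one inaccuracy is your assertion that the flip-flop relation is ``manifestly symmetric'': the definition distinguishes the poset carrying $\leq^f_{+}$ from the one carrying $\leq^f_{-}$, and the relation is in fact not symmetric in general --- for $X$ a two-element antichain, $Y$ a single point and $f$ the constant map, $(X \sqcup Y, \leq^f_{+})$ has two minimal elements and one maximal element while $(X \sqcup Y, \leq^f_{-})$ is its dual, and a short case check shows this ordered pair cannot be realized with the roles of $+$ and $-$ reversed. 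This does not damage your proof, only your stated justification: for a reflection at a source of $Q_i$ you already obtain from Theorem~\ref{t:clff} a flip-flop between $\cT_{\cC_{Q_{i+1}}}$ and $\cT_{\cC_{Q_i}}$, and since universal derived equivalence is itself a symmetric and transitive relation, that is all that is needed to run the composition argument.
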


Since for a Dynkin quiver $Q$, the poset $\cT_{\cC_Q}$ is isomorphic to
the corresponding Cambrian lattice, the above theorem can be restated
as follows.

\begin{cor}
All Cambrian lattices corresponding to the various orientations of the
same Dynkin diagram are universally derived equivalent.
\end{cor}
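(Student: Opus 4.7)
The plan is to derive this corollary immediately from Theorem~\ref{t:clunider} by translating its conclusion into the language of Cambrian lattices; the corollary is essentially a rephrasing rather than a new result, so all the substantive work lies in the theorems already proved.

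First I would verify that any two orientations $Q$ and $Q'$ of the same Dynkin diagram give derived equivalent path algebras. Since a Dynkin diagram is a tree, both orientations are automatically acyclic, so they define quivers without oriented cycles in the sense of the paper. A standard inductive argument on the number of edges whose orientations disagree shows that one can always find a vertex which is a source or sink and at which reflecting brings the two orientations closer together; hence $Q'$ is obtained from $Q$ by a finite sequence of BGP reflections at sources or sinks. By Happel's theorem~\cite[(I.5.7)]{Happel88}, $kQ$ and $kQ'$ are then derived equivalent, and Theorem~\ref{t:clunider} yields that the posets $\cT_{\cC_Q}$ and $\cT_{\cC_{Q'}}$ are universally derived equivalent.

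Second, I would invoke the identification recalled in the introduction, from \cite{IngallsThomas06} together with \cite{Reading06}: when $Q$ is a Dynkin quiver, the poset $\cT_{\cC_Q}$ is isomorphic to the Cambrian lattice corresponding to the orientation $Q$ of the underlying Dynkin diagram. Substituting this isomorphism on both sides of the universal derived equivalence just obtained gives the desired conclusion, since universal derived equivalence is a property of the poset up to isomorphism.

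Since every substantive ingredient is already established (Theorem~\ref{t:clff} supplies the flip-flop, from which Theorem~\ref{t:clunider} is deduced in the text, and the Cambrian lattice identification is quoted from the literature), there is no genuine obstacle here. The only mildly non-trivial point is the elementary claim that any two acyclic orientations of a tree are related by a sequence of source/sink reflections; this is well known, but I would make sure to record it explicitly so that the application of Happel's result, and hence of Theorem~\ref{t:clunider}, is completely justified.
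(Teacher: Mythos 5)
Your proposal is correct and follows the same route as the paper: the corollary is obtained by combining Theorem~\ref{t:clunider} with the identification of $\cT_{\cC_Q}$ for Dynkin $Q$ with the corresponding Cambrian lattice. Your explicit remark that any two acyclic orientations of a tree are connected by source/sink reflections (so that Happel's criterion applies) is a point the paper leaves implicit, but it is standard and does not change the argument.
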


In particular, the incidence algebras of the Cambrian lattices
corresponding to the various orientations the same Dynkin diagram are
derived equivalent, as the universal derived equivalence of two finite
posets implies the derived equivalence of their incidence algebras.

\subsection*{Acknowledgement}

I would like to thank Fr\'{e}d\'{e}ric Chapoton for suggesting a
conjectural version of Theorem~\ref{t:clunider} in the case of Dynkin
quivers and for many helpful discussions.

\section{Cluster tilting objects containing $P_x$}
\label{sec:clsink}

Let $x \in Q$ be a vertex, and denote by $\cT^x_{\cC_Q}$ the subset of
cluster tilting objects containing $P_x$ as direct summand.

\begin{lemma}
Let $M \in \rep Q$. Then $P_x \in \fac M$ if and only if $M$ contains
$P_x$ as a direct summand.
\end{lemma}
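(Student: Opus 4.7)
The plan is to prove the two directions separately, the nontrivial one reducing to the projectivity of $P_x$ together with the Krull--Schmidt property of $\rep Q$.

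For the easy direction, suppose $M$ contains $P_x$ as a direct summand, so $M \simeq P_x \oplus N$ for some $N \in \rep Q$. Then the canonical projection $M \twoheadrightarrow P_x$ exhibits $P_x$ as a quotient of $M$ (that is, of $M^1$), so by definition $P_x \in \fac M$.

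For the converse, assume $P_x \in \fac M$. Then there exists a surjection $\pi : M^n \twoheadrightarrow P_x$ for some positive integer $n$. Since $P_x$ is a projective object of $\rep Q$, the identity map $P_x \to P_x$ lifts through $\pi$, so $\pi$ splits; hence $P_x$ is a direct summand of $M^n$. This is the main (and only) nonformal step, and it is precisely where projectivity of $P_x$ is used.

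Finally, because $Q$ has no oriented cycles, $kQ$ is finite dimensional and $\rep Q$ is a Krull--Schmidt category. Decompose $M$ into indecomposables as $M \simeq \bigoplus_i M_i^{a_i}$; then $M^n \simeq \bigoplus_i M_i^{n a_i}$, and by uniqueness of the Krull--Schmidt decomposition the indecomposable summand $P_x$ of $M^n$ must be isomorphic to some $M_i$ with $a_i \geq 1$. Therefore $P_x$ is a direct summand of $M$, completing the proof. I do not expect any genuine obstacle here; the only subtlety to keep in mind is to invoke Krull--Schmidt explicitly in order to pass from "summand of $M^n$" to "summand of $M$".
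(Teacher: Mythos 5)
Your proof is correct and follows essentially the same route as the paper: use projectivity of $P_x$ to split the surjection $M^n \twoheadrightarrow P_x$ (the paper phrases this via the idempotent $jq$), then pass from a summand of $M^n$ to a summand of $M$ using indecomposability of $P_x$ and Krull--Schmidt. Your version just makes the trivial direction and the Krull--Schmidt step explicit, which the paper leaves implicit.
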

\begin{proof}
Assume that $P_x \in \fac M$, and let $q : M^n \twoheadrightarrow P_x$
be a surjection, for some $n \geq 1$. Since $P_x$ is projective, there
exists $j : P_x \to M^n$ such that $qj = 1_{P_x}$. Let $N = \img j =
\img jq$. As $(jq)^2 = jq$, we deduce that $N$ is a direct summand of
$M^n$ and that $j : P_x \to N$ is an isomorphism. Since $P_x$ is
indecomposable, it is also a summand of $M$.
\end{proof}

\begin{cor} \label{c:cTxPx}
Let $T \in \cT_{\cC_Q}$. Then $T \in \cT^x_{\cC_Q}$ if and only if $P_x
\in \fac T$.
\end{cor}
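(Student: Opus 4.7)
The plan is to reduce the corollary directly to the preceding lemma applied to the module $\wh{T} \in \rep Q$. By the definition of $\fac T$ for a cluster tilting object recalled in the introduction, we have $\fac T = \fac \wh{T}$, where $\wh{T}$ is the sum of the indecomposable summands of $T$ lying in $\rep Q$ (i.e.\ those which are not shifted projectives). Thus the condition $P_x \in \fac T$ is literally the condition $P_x \in \fac \wh{T}$, which by the previous lemma is equivalent to $\wh{T}$ having $P_x$ as a direct summand.

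It then remains to verify the elementary observation that $\wh{T}$ contains $P_x$ as a summand if and only if $T$ does. This is immediate from the Krull--Schmidt decomposition: $P_x$ is an indecomposable object of $\rep Q$, hence it is \emph{not} of the form $P_y[1]$ for any vertex $y$; therefore the indecomposable summands of $T$ isomorphic to $P_x$ are exactly the indecomposable summands of $\wh{T}$ isomorphic to $P_x$. Combining the two equivalences gives $T \in \cT^x_{\cC_Q} \Longleftrightarrow P_x \in \fac T$.

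There is essentially no obstacle here: the preceding lemma does all the real work, and the only thing to be careful about is the distinction between $T$ and $\wh{T}$, which is resolved by noting that $P_x$ is an unshifted indecomposable. I would therefore write the proof as a one-line application of the lemma to $\wh{T}$, together with the remark identifying summands of $T$ with summands of $\wh{T}$ among the unshifted indecomposables.
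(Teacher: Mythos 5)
Your proof is correct and is exactly the argument the paper leaves implicit: the corollary follows by applying the preceding lemma to $\wh{T}$, using $\fac T = \fac \wh{T}$ and the observation that $P_x$, being an unshifted indecomposable, is a summand of $T$ precisely when it is a summand of $\wh{T}$. No issues.
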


\begin{cor} \label{c:cTxclose}
If $T \in \cT^x_{\cC_Q}$ and $T' \leq T$, then $T' \in \cT^x_{\cC_Q}$,
\end{cor}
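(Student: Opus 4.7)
My plan is to derive Corollary~\ref{c:cTxclose} as a direct consequence of Corollary~\ref{c:cTxPx} together with the definition of the partial order on $\cT_{\cC_Q}$ recalled in the introduction. The strategy is to translate the conclusion ``$T' \in \cT^x_{\cC_Q}$'' into a statement purely about the subcategory $\fac T'$, and then exploit the fact that the order $T' \leq T$ is, by definition, the reverse inclusion order on these subcategories.

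More concretely, the first step is to observe that by Corollary~\ref{c:cTxPx}, the hypothesis $T \in \cT^x_{\cC_Q}$ is equivalent to $P_x \in \fac T$. The second step is to recall that $T' \leq T$ means $\fac T' \supseteq \fac T$, so $P_x \in \fac T \subseteq \fac T'$, giving $P_x \in \fac T'$. The third step applies Corollary~\ref{c:cTxPx} in the reverse direction to $T'$: since $P_x \in \fac T'$, we conclude $T' \in \cT^x_{\cC_Q}$.

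I do not anticipate any real obstacle here; the entire content is bookkeeping about which direction of inclusion corresponds to which direction of the order. The only place where one could slip is in the sign of the inequality, but the convention ``$T \leq T'$ iff $\fac T \supseteq \fac T'$'' stated in the introduction pins this down unambiguously, so the argument amounts to a two-line chain. In particular, no further input about cluster tilting objects, mutations, or the structure of $\cC_Q$ is needed; the previous lemma and its corollary have already done all the representation-theoretic work.
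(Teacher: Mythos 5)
Your argument is correct and coincides with the paper's own proof: both reduce membership in $\cT^x_{\cC_Q}$ to the condition $P_x \in \fac T$ via Corollary~\ref{c:cTxPx} and then use $\fac T \subseteq \fac T'$ from $T' \leq T$. Nothing further is needed.
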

\begin{proof}
Let $T \in \cT^x_{\cC_Q}$ and $T' \in \cT_{\cC_Q}$. If $T' \leq T$,
then $P_x \in \fac T \subseteq \fac T'$, hence $T' \in \cT^x_{\cC_Q}$.
\end{proof}

Define a map $f : \cT^x_{\cC_Q} \to \cT_{\cC_Q} \setminus
\cT^x_{\cC_Q}$ as follows. Given $T \in \cT^x_{\cC_Q}$, write $T = P_x
\oplus U$ and set $f(T) = M \oplus U$ where $M$ is the unique other
indecomposable complement of $U$ such that $M \oplus U$ is a cluster
tilting object.

Recall that for a tilting module $T \in \rep Q$, we have $\fac T =
T^{\perp}$ where
\[
T^{\perp} = \left\{ M \in \rep Q \,:\, \Ext^1_Q(T, M) = 0 \right\}
\]

\begin{lemma} \label{l:fTgeT}
Let $T \in \cT^x_{\cC_Q}$. Then $f(T) > T$.
\end{lemma}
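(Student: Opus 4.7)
The plan is to show that $\fac f(T) \subsetneq \fac T$, which by the definition of the partial order on $\cT_{\cC_Q}$ is equivalent to $f(T) > T$. I would verify this by establishing the two assertions $\fac f(T) \neq \fac T$ and $\fac f(T) \subseteq \fac T$ separately.

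The non-equality is immediate from Corollary~\ref{c:cTxPx}: since $T \in \cT^x_{\cC_Q}$ we have $P_x \in \fac T$, while the fact that $f(T) \in \cT_{\cC_Q} \setminus \cT^x_{\cC_Q}$ forces $P_x \notin \fac f(T)$. For the containment, write $T = P_x \oplus U$ and $f(T) = M \oplus U$, and let $\wh{U}$ denote the sum of the non-shifted-projective summands of $U$, so that $\wh{T} = P_x \oplus \wh{U}$. Because $\fac T$ is a torsion class containing $\wh{T}$, it suffices to show that every non-shifted-projective summand of $f(T)$ lies in $\fac T$; those coming from $\wh{U}$ are there automatically. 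If $M$ is itself a shifted projective then $\wh{f(T)} = \wh{U}$ and there is nothing further to check, so the essential case is $M \in \rep Q$. Here I would invoke the exchange triangle $P_x \to B \to M \to P_x[1]$ in $\cC_Q$ attached to the mutation, where $B \in \add U$ is a minimal left $\add U$-approximation of $P_x$, and extract from it a short exact sequence $0 \to P_x \to B_0 \to M \to 0$ in $\rep Q$ with $B_0 \in \add \wh{U}$, exhibiting $M$ as a quotient of $B_0 \in \fac T$.

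The main obstacle is this extraction step, since $B$ may a priori contain shifted-projective summands and the approximation morphism lives in the enlarged Hom space $\Hom_{\cC_Q}(P_x, B)$ of the cluster category. Controlling the components of the approximation that land in the shifted-projective part of $B$, by exploiting the projectivity of $P_x$ in $\rep Q$ together with the orbit description of morphisms in $\cC_Q$, is the real technical work required before one can pass from the exchange triangle to an honest short exact sequence in $\rep Q$ and complete the proof.
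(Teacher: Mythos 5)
Your reduction of the statement to the single claim $M \in \fac T$ is the right move, and your argument for strictness (via Corollary~\ref{c:cTxPx}, since $P_x \in \fac T$ but $P_x \notin \fac f(T)$) is correct, though strictness also follows just from injectivity of $T \mapsto \fac T$. The problem is that the one step carrying all the content --- getting $M$ as a quotient of an object of $\add \wh{U}$ --- is exactly the step you leave open. You propose to extract a short exact sequence $0 \to P_x \to B_0 \to M \to 0$ in $\rep Q$ from the exchange triangle $P_x \to B \to M \to P_x[1]$ in $\cC_Q$, and then explicitly state that controlling the shifted-projective components of $B$ and of the approximation map is ``the real technical work required.'' As written, that work is not done, so the proof is incomplete; moreover, even granting that $B$ has no shifted summands, you would still need to justify why this particular exchange triangle (rather than the one in the other direction) is induced by a genuine short exact sequence of modules, which is a nontrivial point from the theory of complements of almost complete tilting modules.

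The paper sidesteps all of this. After disposing of the case where $M$ is a shifted projective, it deletes the vertices of $Q$ corresponding to the shifted projective summands of $U$, so that $P_x \oplus U$ and $M \oplus U$ become honest tilting modules; then
\[
\fac (P_x \oplus U) = (P_x \oplus U)^{\perp} = U^{\perp} \ni M,
\]
where the middle equality holds because $P_x$ is projective (so $\Ext^1_Q(P_x,-)=0$) and $M \in U^{\perp}$ because $M \oplus U$ is rigid. No exchange sequence is needed. If you want to salvage your route, perform that same deletion \emph{before} invoking the exchange triangle: in the pure tilting-module setting the exchange sequence $0 \to P_x \to B_0 \to M \to 0$ with $B_0 \in \add U$ does exist (this is the Happel--Unger/Riedtmann--Schofield theory of complements, and the direction is forced by projectivity of $P_x$, since a sequence ending in $P_x$ would split). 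But either way, a citation or proof of that fact must replace the paragraph where you currently acknowledge the gap.
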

\begin{proof}
One could deduce the claim from Lemma~2.32 of~\cite{IngallsThomas06}.
Instead, we shall give a direct proof. Write $T = P_x \oplus U$ and
$f(T) = M \oplus U$. If $M$ is a shifted projective, the claim is
clear. Otherwise, by deleting the vertices of $Q$ corresponding to the
shifted projective summands of $U$, we may and will assume that $P_x
\oplus U$ and $M \oplus U$ are tilting modules. Therefore
\[
\fac (P_x \oplus U) = (P_x \oplus U)^{\perp} = U^{\perp}
\]
where the last equality follows since $P_x$ is projective. As $M \in
U^{\perp}$, we get that $M \in \fac (P_x \oplus U)$, hence $\fac (M
\oplus U) \subseteq \fac (P_x \oplus U)$.
\end{proof}

For the rest of this section, \emph{we assume that the vertex $x$ is a
sink in $Q$}. In this case, $P_x = S_x$ and $\ind \fac S_x = \{S_x\}$.
Moreover, $S_x \not \in \fac M$ for any other indecomposable $M \neq
S_x$, since $\Hom_Q(M, S_x) = 0$.

\begin{lemma} \label{l:facfT}
Let $T \in \cT^x_{\cC_Q}$. Then $\ind \fac f(T) = \ind \fac T \setminus
\{S_x\}$.
\end{lemma}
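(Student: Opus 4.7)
My plan is to split the equality into the two containments and attack each separately. The inclusion $\ind \fac f(T) \subseteq \ind \fac T \setminus \{S_x\}$ is formal: Lemma~\ref{l:fTgeT} gives $f(T) > T$ and hence $\fac f(T) \subseteq \fac T$, while $f(T) \notin \cT^x_{\cC_Q}$ (by the very definition of $f$) combined with Corollary~\ref{c:cTxPx} yields $S_x \notin \fac f(T)$.

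The content lies in the reverse inclusion. Writing $f(T) = M \oplus U$, I would first reduce the problem to showing that every indecomposable $N \in \fac T$ with $N \neq S_x$ actually lies in $\fac \wh{U}$. This reduction is valid because $\fac \wh{U} \subseteq \fac f(T) = \fac \wh{f(T)}$ regardless of whether the mutated summand $M$ is a module (in which case $\wh{f(T)} = M \oplus \wh{U}$) or a shifted projective (in which case $\wh{f(T)} = \wh{U}$), and it has the advantage of bypassing any explicit use of the exchange sequence of the mutation.

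Given such an $N$, I would begin with an arbitrary surjection $q \colon S_x^n \oplus \wh{U}^n \twoheadrightarrow N$ and set $N' = q(\wh{U}^n) \subseteq N$. Composing $q$ with the projection $N \twoheadrightarrow N/N'$ kills $\wh{U}^n$, so the map induces a surjection $S_x^n \twoheadrightarrow N/N'$, whence $N/N' \cong S_x^m$ for some $m \geq 0$. If $m \geq 1$, then $N$ would admit a nonzero quotient isomorphic to $S_x$; but this contradicts the fact (recorded just before the lemma) that $x$ being a sink forces $\Hom_Q(N, S_x) = 0$ for every indecomposable $N \neq S_x$. Therefore $m = 0$, so $N' = N$, the restricted map $\wh{U}^n \twoheadrightarrow N$ is already surjective, and $N$ lies in $\fac \wh{U}$, as required.

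I do not anticipate a serious obstacle; the main subtlety worth verifying is that the reduction to $\fac \wh{U}$ is valid uniformly in both cases of the mutated summand, so that the exchange sequence of the mutation plays no role in the core computation.
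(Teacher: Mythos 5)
Your proof is correct and follows essentially the same route as the paper: both arguments combine Lemma~\ref{l:fTgeT} with the sink observations $\ind \fac S_x = \{S_x\}$ and $\Hom_Q(N,S_x)=0$ for indecomposable $N \neq S_x$, the only difference being that you spell out explicitly (via the quotient $N/q(\wh{U}^n)$) the splitting $\ind\fac(S_x \oplus \wh{U}) = \{S_x\} \amalg \ind\fac \wh{U}$ that the paper attributes to ``the preceding remarks.'' No gaps.
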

\begin{proof}
Write $T = S_x \oplus U$ and $f(T) = M \oplus U$. By the preceding
remarks,
\[
\ind \fac T = \ind \fac (S_x \oplus U) = \ind \fac S_x \cup \ind \fac U
\]
is a disjoint union $\{S_x\} \amalg \ind \fac U$. By
Lemma~\ref{l:fTgeT},
\[
\ind \fac f(T) = \ind \fac (M \oplus U) \subseteq \ind \fac (S_x \oplus
U) = \{S_x\} \amalg \ind \fac U ,
\]
therefore $\ind \fac(M \oplus U) = \ind \fac U$, as $S_x \not \in \fac
M$.
\end{proof}

\begin{cor} \label{c:fTgeuniq}
Let $T \in \cT^x_{\cC_Q}$ and $T' \in \cT_{\cC_Q} \setminus
\cT^x_{\cC_Q}$ be such that $T' > T$. Then $T' \geq f(T)$.
\end{cor}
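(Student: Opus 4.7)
The plan is to translate everything into the language of $\fac$ via the definition of the partial order on $\cT_{\cC_Q}$, so that the conclusion $T' \geq f(T)$ becomes the inclusion $\fac T' \subseteq \fac f(T)$, and then to read this inclusion off directly from Lemma~\ref{l:facfT} together with Corollary~\ref{c:cTxPx}.

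More concretely, the hypothesis $T' > T$ unfolds as $\fac T' \subsetneq \fac T$, and the hypothesis $T' \in \cT_{\cC_Q} \setminus \cT^x_{\cC_Q}$, combined with Corollary~\ref{c:cTxPx} (recall $P_x = S_x$ since $x$ is a sink), gives $S_x \notin \fac T'$. So I would pick an arbitrary indecomposable $N \in \ind \fac T'$ and argue in two steps: first, $N \in \fac T' \subseteq \fac T$, so $N \in \ind \fac T$; second, $N \neq S_x$ because $S_x \notin \fac T'$ while $N \in \fac T'$. Hence $N \in \ind \fac T \setminus \{S_x\}$, which by Lemma~\ref{l:facfT} coincides with $\ind \fac f(T)$. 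Since torsion-type subcategories $\fac(-)$ are determined by their indecomposables, this yields $\fac T' \subseteq \fac f(T)$, i.e.\ $T' \geq f(T)$.

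There is essentially no obstacle; the content is entirely bundled into Lemma~\ref{l:facfT}, whose use of the sink hypothesis (through $\ind \fac S_x = \{S_x\}$ and $S_x \notin \fac M$ for $M \neq S_x$ indecomposable) is what makes the two indecomposable sets agree. The only thing to be slightly careful about is that the partial order is by reverse inclusion of $\fac$, so the inequality $T' > T$ gives $\fac T \supseteq \fac T'$ (the right direction for the step $N \in \fac T' \Rightarrow N \in \fac T$), and similarly the desired $T' \geq f(T)$ is an inclusion $\fac T' \subseteq \fac f(T)$; once this bookkeeping is set up the argument is a one-line diagram chase on indecomposables.
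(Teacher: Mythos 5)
Your argument is correct and is essentially identical to the paper's own proof: both pass from $T' > T$ to $\fac T' \subseteq \fac T$, use $T' \notin \cT^x_{\cC_Q}$ (via Corollary~\ref{c:cTxPx}) to exclude $S_x$ from $\fac T'$, and then invoke Lemma~\ref{l:facfT} to conclude $\ind \fac T' \subseteq \ind \fac f(T)$, hence $T' \geq f(T)$. No issues.
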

\begin{proof}
By assumption, $\fac T' \subseteq \fac T$. Moreover, $S_x \not \in \fac
T'$, since $T' \not \in \cT^x_{\cC_Q}$. Hence by Lemma~\ref{l:facfT},
$\ind \fac T' \subseteq \ind \fac f(T)$, thus $T' \geq f(T)$.
\end{proof}

\begin{cor} \label{c:ffclsink}
The map $f : \cT^x_{\cC_Q} \to \cT_{\cC_Q} \setminus \cT^x_{\cC_Q}$ is
order preserving and
\[
\cT_{\cC_Q} \simeq \bigl( \cT^x_{\cC_Q} \sqcup (\cT_{\cC_Q} \setminus
\cT^x_{\cC_Q}), \leq^f_{+} \bigr)
\]
\end{cor}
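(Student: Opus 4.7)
The plan is to read the corollary off directly from the lemmas already proved in this section. The statement has two parts: first, that $f$ is order preserving; second, that the partial order on $\cT_{\cC_Q}$, when transferred to the disjoint union decomposition $\cT^x_{\cC_Q} \sqcup (\cT_{\cC_Q} \setminus \cT^x_{\cC_Q})$, coincides with $\leq^f_{+}$.

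For order preservation, I would start with $T_1 \leq T_2$ in $\cT^x_{\cC_Q}$, which by definition means $\ind \fac T_2 \subseteq \ind \fac T_1$, and strip the common element $S_x$ from both sides using Lemma~\ref{l:facfT} to conclude $\ind \fac f(T_2) \subseteq \ind \fac f(T_1)$, hence $f(T_1) \leq f(T_2)$.

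For the poset isomorphism, my plan is to check case by case that the original order and $\leq^f_{+}$ agree on $\cT^x_{\cC_Q} \sqcup (\cT_{\cC_Q} \setminus \cT^x_{\cC_Q})$. Inside each of the two parts the orders coincide by construction, so the real content lies in the cross relations. Relations of the form $T' \leq T$ with $T \in \cT^x_{\cC_Q}$ and $T' \notin \cT^x_{\cC_Q}$ are ruled out by Corollary~\ref{c:cTxclose}, matching the absence of such relations in $\leq^f_{+}$. For the other direction, $T \leq T'$ with $T \in \cT^x_{\cC_Q}$ and $T' \notin \cT^x_{\cC_Q}$ forces a strict inequality, so Corollary~\ref{c:fTgeuniq} upgrades it to $f(T) \leq T'$. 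Conversely, if $f(T) \leq T'$, then chaining with the strict inequality $T < f(T)$ supplied by Lemma~\ref{l:fTgeT} yields $T \leq T'$.

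There is no genuine obstacle remaining at this stage: the substantive work was carried out in the preceding lemmas, which already separate the summand $S_x$ from the rest of the fac-classes and eliminate the spurious downward cross-relations. The corollary is thus essentially a bookkeeping step, repackaging Corollaries~\ref{c:cTxclose} and~\ref{c:fTgeuniq} together with Lemmas~\ref{l:fTgeT} and~\ref{l:facfT} into the language of flip-flops.
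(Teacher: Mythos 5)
Your proof is correct and follows essentially the same route as the paper: the isomorphism part is exactly the paper's intended combination of Corollaries~\ref{c:cTxclose} and~\ref{c:fTgeuniq} with Lemma~\ref{l:fTgeT}. The only (immaterial) difference is in the order-preservation half, where you strip $S_x$ via Lemma~\ref{l:facfT} directly, while the paper chains $f(T')>T'\geq T$ and then applies Corollary~\ref{c:fTgeuniq}; both are one-line consequences of the same lemmas.
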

\begin{proof}
If $T, T' \in \cT^x_{\cC_Q}$ are such that $T' \geq T$, then by
Lemma~\ref{l:fTgeT}, $f(T') > T' \geq T$, hence by
Corollary~\ref{c:fTgeuniq}, $f(T') \geq f(T)$, therefore $f$ is order
preserving. The other assertion follows from
Corollaries~\ref{c:cTxclose},~\ref{c:fTgeuniq} and Lemma~\ref{l:fTgeT}.
\end{proof}

\section{Cluster tilting objects containing $P_x[1]$}
\label{sec:clsource}

For $M \in \rep Q$ and $y \in Q$, let $M(y)$ denote the vector space
corresponding to $y$, and let $\supp M = \{y \in Q \,:\, M(y) \neq 0
\}$ be the \emph{support} of $M$.

Let $x \in Q$ be a vertex, and denote by $\cT^{x[1]}_{\cC_Q}$ the
subset of cluster tilting objects containing the shifted indecomposable
projective $P_x[1]$ as direct summand.

\begin{lemma} \label{l:cTxopen}
If $T \in \cT^{x[1]}_{\cC_Q}$ and $T' \geq T$, then $T' \in
\cT^{x[1]}_{\cC_Q}$.
\end{lemma}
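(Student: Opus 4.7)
My plan is to characterize membership in $\cT^{x[1]}_{\cC_Q}$ intrinsically via a support condition on $\fac T$, and then to exploit the monotonicity of $\fac$ under the partial order. This parallels the strategy of Section~\ref{sec:clsink}, where Corollary~\ref{c:cTxclose} was obtained from the characterization $T \in \cT^x_{\cC_Q} \Leftrightarrow P_x \in \fac T$ of Corollary~\ref{c:cTxPx}.

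The key step is to prove
\[
T \in \cT^{x[1]}_{\cC_Q} \Longleftrightarrow x \notin \supp(\fac T),
\]
that is, $M(x) = 0$ for every $M \in \fac T$. Writing $T = \wh T \oplus P[1]$ with $P$ projective, and noting that $\fac T = \fac \wh T$ so $\supp(\fac T) = \supp \wh T$, the characterization reduces to showing that $P_x[1]$ is a summand of $T$ if and only if $\wh T(x) = 0$. For this I would use the orbit-category description of $\cC_Q$ together with Serre duality on $\cD^b(Q)$ to compute
\[
\Ext^1_{\cC_Q}(M, P_x[1]) \cong \Hom_{\cD^b(Q)}(M, S P_x) \cong D \Hom_Q(P_x, M) \cong D M(x)
\]
for any module $M$, while $\Ext^1_{\cC_Q}(P_y[1], P_x[1]) = 0$ for every $y \in Q$. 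If $P_x[1]$ is a summand of $T$, applying this to each non-shifted indecomposable summand of $T$ yields $\wh T(x) = 0$. Conversely, if $\wh T(x) = 0$, then $P_x \nmid \wh T$ and the same Ext computations show that $T \oplus P_x[1]$ is basic and $\Ext^1$-self-vanishing; maximality of cluster tilting objects forces $P_x[1]$ to be a summand of $T$ already.

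Granted the characterization, the lemma follows at once: $T' \geq T$ means $\fac T' \subseteq \fac T$, hence $\supp \fac T' \subseteq \supp \fac T$, so $x \notin \supp \fac T$ implies $x \notin \supp \fac T'$, whence $T' \in \cT^{x[1]}_{\cC_Q}$. The main technical hurdle is the cluster category Ext computation, which relies on the orbit-category formula for $\Hom_{\cC_Q}$ and on the Serre functor carrying the indecomposable projective $P_x$ to the indecomposable injective $I_x$; all other terms in the orbit sum vanish since $\rep Q$ is hereditary and modules sit in a single cohomological degree.
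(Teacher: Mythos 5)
Your proof is correct and follows essentially the same route as the paper: the paper likewise identifies $\Ext^1_{\cC_Q}(P_x[1],T)=0$ with $\Hom_Q(P_x,\wh T)=0$, i.e.\ $x \notin \supp \wh T$, notes that this support condition is inherited by $\fac T' \subseteq \fac T$, and invokes maximality of $T'$ to force $P_x[1]$ as a summand. The only cosmetic difference is that you package the Ext computation as a full characterization of $\cT^{x[1]}_{\cC_Q}$ before applying monotonicity, and spell out the Serre-duality calculation that the paper takes as known.
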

\begin{proof}
Since $T$ contains $P_x[1]$ as summand, we have $\Ext^1_{\cC_Q}(P_x[1],
T) = 0$, that is, $\Hom_Q(P_x, \wh{T}) = 0$, or equivalently $x \not
\in \supp \wh{T}$.

Now let $T' \geq T$. Then all the modules in $\fac T' \subseteq \fac T$
are not supported on $x$, and in particular $\Hom_Q(P_x, \wh{T'}) = 0$,
thus $\Ext^1_{\cC_Q}(P_x[1], T') = 0$. The maximality of $T'$ implies
that it contains $P_x[1]$ as summand.
\end{proof}

Similarly to the previous section, define a map $g : \cT^{x[1]}_{\cC_Q}
\to \cT_{\cC_Q} \setminus \cT^{x[1]}_{\cC_Q}$ as follows. Given $T \in
\cT^{x[1]}_{\cC_Q}$, write $T = P_x[1] \oplus U$ and set $g(T) = M
\oplus U$ where $M$ is the unique other indecomposable complement of
$U$ such that $M \oplus U$ is a cluster tilting object.

\begin{lemma} \label{l:gTltT}
Let $T \in \cT^{x[1]}_{\cC_Q}$. Then $g(T) < T$.
\end{lemma}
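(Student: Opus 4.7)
The plan is to show the weak inequality $g(T) \leq T$ by a direct computation of $\fac$, and then promote it to the strict inequality using the Ingalls--Thomas bijection recalled in the introduction. First, I would write $T = P_x[1] \oplus U$ and further decompose $U = U_0 \oplus U_1$, where $U_0 \in \rep Q$ collects the module summands and $U_1$ is the direct sum of the shifted projective summands of $U$. By definition of $\wh{(\cdot)}$ we have $\wh T = U_0$ and hence $\fac T = \fac U_0$.

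Next, writing $g(T) = M \oplus U$, the key observation is that $U_0$ is a direct summand of $\wh{g(T)}$ regardless of whether the new complement $M$ is a module or a shifted projective: in the former case $\wh{g(T)} = M \oplus U_0$, and in the latter $\wh{g(T)} = U_0$. In either case $\fac g(T) \supseteq \fac U_0 = \fac T$, which translates to $g(T) \leq T$. For strictness, I would invoke the bijectivity of the correspondence $T \mapsto \fac T$ established in~\cite{IngallsThomas06}: since $g(T) \neq T$ (as $M \neq P_x[1]$ by the definition of the mutation), this bijection forces $\fac g(T) \neq \fac T$, and combined with the previous inclusion this gives $\fac g(T) \supsetneq \fac T$, i.e.\ $g(T) < T$.

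The main conceptual subtlety is the case in which $M$ is itself a shifted projective. A priori this could occur, and in that case the weak inclusion above is actually an equality of torsion classes; only the injectivity of the Ingalls--Thomas bijection rules out $g(T) = T$ and, implicitly, rules out $M$ being a shifted projective. A more self-contained alternative would be to analyze the exchange triangle at $P_x[1]$ in $\cC_Q$ and show directly that $M \in \rep Q$, but invoking the already-established bijection seems the cleaner route and closely parallels the proof of Lemma~\ref{l:fTgeT}, where the strict inequality is likewise obtained from a weak inclusion combined with the distinctness of cluster tilting objects.
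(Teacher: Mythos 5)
Your proposal is correct and follows essentially the same route as the paper, which simply notes $\fac g(T) = \fac(M\oplus U) \supseteq \fac U = \fac T$ and leaves the strictness implicit. Your extra care about the strict inequality (and about the possibility that $M$ is a shifted projective) via the injectivity of the Ingalls--Thomas correspondence $T \mapsto \fac T$ just makes explicit what the paper treats as obvious.
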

\begin{proof}
This is obvious. Indeed, write $T = P_x[1] \oplus U$ and $g(T) = M
\oplus U$. Then $\fac g(T) = \fac (M \oplus U) \supseteq \fac U = \fac
T$.
\end{proof}

For the rest of this section, \emph{we assume that the vertex $x$ is a
source}. In this case, for any module $M \in \rep Q$, we have that $S_x
\in \fac M$ if and only if $M$ is supported at $x$. Therefore we deduce
the following lemma, which can be viewed as an analogue of
Corollary~\ref{c:cTxPx}.

\begin{lemma}
Let $T \in \cT_{\cC_Q}$. Then $T \in \cT^{x[1]}_{\cC_Q}$ if and only if
$S_x \not \in \fac T$.
\end{lemma}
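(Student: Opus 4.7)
The plan is to chain two equivalences, both going through the support of $\wh{T}$ at $x$. Concretely, I would prove:
\[
T \in \cT^{x[1]}_{\cC_Q} \iff x \not\in \supp \wh{T} \iff S_x \not\in \fac T.
\]

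The second equivalence is a direct application of the remark immediately preceding the lemma, combined with the identity $\fac T = \fac \wh{T}$: since $x$ is a source, $S_x \in \fac \wh{T}$ if and only if $\wh{T}$ is supported at $x$. So this step is essentially just a citation of the preceding paragraph.

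For the first equivalence, I would reuse (and now exploit in both directions) the computation already used in the proof of Lemma~\ref{l:cTxopen}: there is a natural isomorphism $\Ext^1_{\cC_Q}(P_x[1], T) \cong \Hom_Q(P_x, \wh{T}) = \wh{T}(x)$, where the first identification follows from the definition of the cluster category as an orbit category of $\cD^b(Q)$ (so that $\Ext^1_{\cC_Q}(P_x[1], \wh{T}) = \Hom_{\cD^b(Q)}(P_x, \wh{T})$, and shifted projective summands of $T$ do not contribute). For the forward direction, if $P_x[1]$ is a summand of $T$, then $\Ext^1_{\cC_Q}(P_x[1], T) = 0$ since $T$ is cluster tilting, hence $\wh{T}(x)=0$. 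For the converse, if $x \not\in \supp \wh{T}$, then $\Ext^1_{\cC_Q}(P_x[1], T) = 0$; the $2$-Calabi--Yau property of $\cC_Q$ gives also $\Ext^1_{\cC_Q}(T, P_x[1]) = 0$, and together with $\Ext^1_{\cC_Q}(P_x[1],P_x[1])=0$ the maximality of $T$ (whose number of indecomposable summands already equals the number of vertices of $Q$) forces $P_x[1]$ to be a summand of $T$.

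There is no real obstacle; the content of the lemma is already implicit in the proof of Lemma~\ref{l:cTxopen}, and the only new ingredient is the backward implication $x \not\in \supp \wh{T} \Rightarrow P_x[1] \mid T$, which is the standard use of maximality in cluster tilting theory. The whole proof should fit in a few lines.
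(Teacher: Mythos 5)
Your proof is correct and follows essentially the same route as the paper, which states the lemma without an explicit proof, deducing it from the remark that for a source $x$ one has $S_x \in \fac M$ iff $x \in \supp M$, together with the equivalence $T \in \cT^{x[1]}_{\cC_Q} \iff \Hom_Q(P_x,\wh{T}) = 0$ already established (in both directions, including the maximality argument) in the proof of Lemma~\ref{l:cTxopen}. Your only addition is to make explicit the $2$-Calabi--Yau symmetry of $\Ext^1_{\cC_Q}$ that the paper's maximality argument uses implicitly.
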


Recall that a basic module $U \in \rep Q$ is an \emph{almost complete
tilting module} if $\Ext^1_Q(U,U)=0$ and the number of indecomposable
summands of $U$ equals the number of vertices of $Q$ less one. A
\emph{complement} to $U$ is an indecomposable $M$ such that $M \oplus
U$ is a tilting module. It is known~\cite{HappelUnger89} that an almost
complete tilting module $U$ has at most two complements, and exactly
two if and only if $U$ is sincere, that is, $\supp U = Q$.

\begin{prop} \label{p:UMtorsion}
Let $U$ be an almost complete tilting module of $\rep Q$ not supported
on $x$, and let $M$ be its unique indecomposable complement to a
tilting module. Let $\cX$ be a torsion class in $\rep Q$ satisfying
$\fac U \subseteq \cX$ and $S_x \in \cX$. Then $M \in \cX$.
\end{prop}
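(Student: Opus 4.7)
The plan is to exhibit $M$ as an extension whose two outer terms both lie in $\cX$, and conclude by the closure of $\cX$ under direct sums and extensions. The crucial reduction is that $U$, having $|Q|-1$ indecomposable summands and being unsupported at $x$, becomes a full tilting module of $\rep(\Qx)$ via the fully faithful extension-by-zero embedding $\rep(\Qx) \hookrightarrow \rep Q$; in particular $\fac U = U^{\perp}$ inside $\rep(\Qx)$. For the same reason, $M$ cannot itself lie in $\rep(\Qx)$ (otherwise $M \oplus U$ would be a partial tilting module of $\rep(\Qx)$ with more indecomposable summands than simples), so $M$ must be supported at $x$.

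Next, since $x$ is a source, every representation $V$ admits a canonical short exact sequence
\[ 0 \to V' \to V \to V(x) \otimes_k S_x \to 0 \]
where $V'$ is the submodule that vanishes at $x$ and agrees with $V$ elsewhere. Applying this to $M$ yields $0 \to M' \to M \to M(x) \otimes_k S_x \to 0$ with $M' \in \rep(\Qx)$. I would then apply $\Hom_Q(U, -)$: since $U$ is unsupported at $x$ we have $\Hom_Q(U, S_x) = 0$, and since $S_x$ is injective (as $x$ is a source) we have $\Ext^1_Q(U, S_x) = 0$. Combined with $\Ext^1_Q(U, M) = 0$ (which holds because $M \oplus U$ is a tilting module), the resulting long exact sequence forces $\Ext^1_Q(U, M') = 0$. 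Moreover, any extension of $U$ by $M'$ in $\rep Q$ is automatically supported off $x$ (both endpoints are), so $\Ext^1_Q(U, M') = \Ext^1_{\Qx}(U, M')$, and hence $M' \in U^{\perp} = \fac U \subseteq \cX$.

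Finally, $M(x) \otimes_k S_x$ is a direct sum of copies of $S_x \in \cX$ and thus lies in $\cX$, so extension closure of the torsion class $\cX$ yields $M \in \cX$. The main obstacle I anticipate is the initial reduction to $\rep(\Qx)$ — realizing $U$ as a full tilting module there and identifying the relevant Ext groups across the embedding; both steps rely essentially on $x$ being a source (no non-trivial path ends at $x$, so the projectives $P_y$ for $y \neq x$ transfer cleanly, and $S_x$ is injective). Once this is in place, the rest amounts to a short computation using the canonical sequence at the source $x$ and the tilting identity $\fac U = U^{\perp}$.
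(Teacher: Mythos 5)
Your proof is correct and follows essentially the same route as the paper: pass to $\rep(\Qx)$ where the restriction of $U$ is a full tilting module, use the canonical exact sequence $0 \to \jls\ju M \to M \to S_x^{n} \to 0$ coming from $x$ being a source, and conclude that $M \in \cX$ by extension-closure since $S_x \in \cX$. The only (harmless) divergence is in how the subrepresentation $M' = \jls\ju M$ is placed in $\cX$: you derive $\Ext^1_{\Qx}(U, M') = 0$ from the long exact sequence and invoke $U^{\perp} = \fac U \subseteq \cX$, whereas the paper obtains the slightly stronger conclusion $M' \in \add U$ by citing that $\ju(M \oplus U)$ is again a tilting module of $\rep(\Qx)$ --- your version is a bit more self-contained and equally sufficient.
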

\begin{proof}
The natural inclusion $j : \Qx \to Q$ induces a pair $(\jls, \ju)$ of
exact functors
\begin{align*}
\ju : \rep Q \to \rep (\Qx) && \jls : \rep (\Qx) \to \rep Q
\end{align*}
where $\ju$ is the natural restriction and $\jls$ is its left adjoint,
defined as the extension of a representation of $\Qx$ by zero at $x$.

Now $\jls \ju U \simeq U$ since $U$ is not supported on $x$. By
adjunction and exactness,
\[
\Ext^1_{\Qx}(\ju U, \ju U) \simeq \Ext^1_Q(\jls \ju U, U) = \Ext^1_Q(U,
U)
\]
thus $\ju U$ is a (basic) tilting module of $\rep (\Qx)$. However,
by~\cite[Proposition~2.6]{Ladkani07t}, $\ju(M \oplus U)$ is also a
tilting module of $\rep (\Qx)$, but not necessarily basic. It follows
that $\ju M \in \add \ju U$, hence $\jls \ju M \in \add \jls \ju U =
\add U$.

The adjunction morphism $\jls \ju M \to M$ is injective, and we have an
exact sequence
\[
0 \to \jls \ju M \to M \to S_x^n \to 0
\]
for some $n \geq 0$. Now $S_x \in \cX$ by assumption and $\jls \ju M
\in \add U \subseteq \cX$, hence $M \in \cX$ as $\cX$ is closed under
extensions.
\end{proof}

\begin{cor} \label{c:gTleuniq}
Let $T \in \cT^{x[1]}_{\cC_Q}$ and $T' \in \cT_{\cC_Q} \setminus
\cT^{x[1]}_{\cC_Q}$ be such that $T' < T$. Then $T' \leq g(T)$.
\end{cor}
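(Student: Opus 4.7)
The target is $\fac g(T) \subseteq \fac T'$. Writing $T = P_x[1] \oplus U$ and $g(T) = M \oplus U$ and letting $U_0$ denote the module part of $U$, we have $\wh{T} = U_0$, so $\fac T = \fac U_0$, and the hypothesis $T' < T$ immediately gives $\fac U_0 \subseteq \fac T'$. If $M$ is a shifted projective then $\wh{g(T)} = U_0$ as well and we are already done; otherwise $M \in \rep Q$, $\wh{g(T)} = M \oplus U_0$, and the remaining step is to show $M \in \fac T'$.

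The plan is then to apply Proposition~\ref{p:UMtorsion} after passing to a suitable subquiver. Let $Q_0$ be the full subquiver of $Q$ obtained by deleting the vertices $y \neq x$ for which $P_y[1]$ is a summand of $U$. The standard correspondence between the shifted projective summands of a cluster tilting object and vertex deletions yielding tilting modules shows that $U_0$ is a tilting module of $\rep(Q_0 \setminus \{x\})$, hence an almost complete tilting module of $\rep Q_0$ not supported at $x$. Under the same correspondence applied to $g(T)$, the object $M \oplus U_0$ is a tilting module of $\rep Q_0$; since $U_0$ is not sincere in $Q_0$, $M$ is the unique indecomposable complement of $U_0$ to a tilting module of $\rep Q_0$.

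Now set $\cX = \fac T' \cap \rep Q_0$. I would verify that $\cX$ is a torsion class in $\rep Q_0$, with closure under quotients and extensions transferred from $\fac T'$ using that $\rep Q_0$ embeds into $\rep Q$ as the full subcategory of $Q_0$-supported modules. Next, $\fac U_0 \subseteq \cX$ follows from combining $\fac U_0 \subseteq \fac T'$ with $\fac U_0 \subseteq \rep Q_0$, and $S_x \in \cX$ holds because the lemma preceding Proposition~\ref{p:UMtorsion} gives $S_x \in \fac T'$ from $T' \notin \cT^{x[1]}_{\cC_Q}$. Proposition~\ref{p:UMtorsion} applied inside $\rep Q_0$ then delivers $M \in \cX \subseteq \fac T'$, which completes the argument.

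The main obstacle I anticipate is the bookkeeping of the reduction step: verifying cleanly that $U_0$ is an almost complete tilting module of $\rep Q_0$ with $M$ as its unique tilting complement there, and that intersecting a torsion class with $\rep Q_0$ again produces a torsion class. Once these technical points are in place, the proof is essentially a single application of Proposition~\ref{p:UMtorsion}.
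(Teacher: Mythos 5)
Your proposal is correct and follows essentially the same route as the paper: reduce to the module setting by deleting the vertices corresponding to the shifted projective summands of $U$, so that $U_0$ becomes an almost complete tilting module with unique complement $M$, and then apply Proposition~\ref{p:UMtorsion} to the torsion class $\fac T'$ (which contains $S_x$ because $T' \notin \cT^{x[1]}_{\cC_Q}$). The only difference is that you spell out the bookkeeping of the reduction (intersecting $\fac T'$ with $\rep Q_0$ and checking it is still a torsion class), which the paper compresses into ``we may and will assume.''
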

\begin{proof}
Write $T = P_x[1] \oplus U$ and $g(T) = M \oplus U$. The assumptions on
$T'$ imply that $S_x \in \fac T'$ and $\fac U = \fac T \subseteq \fac
T'$.

By deleting the vertices of $Q$ corresponding to the shifted projective
summands of $U$, we may and will assume that $M \oplus U$ is a tilting
module, so that $U$ is an almost complete tilting module. Applying
Proposition~\ref{p:UMtorsion} for $\cX = \fac T'$, we deduce that $M
\in \fac T'$, hence $\fac g(T) = \fac(M \oplus U) \subseteq \fac T'$.
\end{proof}

\begin{cor} \label{c:ffclsource}
The map $g : \cT^{x[1]}_{\cC_Q} \to \cT_{\cC_Q} \setminus
\cT^{x[1]}_{\cC_Q}$ is order preserving and
\[
\cT_{\cC_Q} \simeq \bigl( \cT^{x[1]}_{\cC_Q} \sqcup (\cT_{\cC_Q}
\setminus \cT^{x[1]}_{\cC_Q}), \leq^g_{-} \bigr)
\]
\end{cor}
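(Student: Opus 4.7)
The plan is to mirror the proof of Corollary~\ref{c:ffclsink}, with two sign changes reflecting the asymmetry noted in the introduction: $g$ decreases its argument (Lemma~\ref{l:gTltT}) whereas $f$ increased it, and $\cT^{x[1]}_{\cC_Q}$ is upward-closed (Lemma~\ref{l:cTxopen}) whereas $\cT^x_{\cC_Q}$ was downward-closed. All the ingredients are now in place: Lemma~\ref{l:gTltT} gives $g(T)<T$, Corollary~\ref{c:gTleuniq} characterises $g(T)$ as the maximum element of $\cT_{\cC_Q}\setminus\cT^{x[1]}_{\cC_Q}$ lying strictly below $T$, and Lemma~\ref{l:cTxopen} controls the cross relations.

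First I would check that $g$ is order preserving. Given $T\le T'$ with both in $\cT^{x[1]}_{\cC_Q}$, Lemma~\ref{l:gTltT} yields $g(T)<T\le T'$; since $g(T)\in\cT_{\cC_Q}\setminus\cT^{x[1]}_{\cC_Q}$ by construction of $g$, Corollary~\ref{c:gTleuniq} (applied with $T'$ in the role of $T$ and $g(T)$ in the role of $T'$) delivers $g(T)\le g(T')$. Next I would exhibit the poset isomorphism via the identity map on the common underlying set $\cT_{\cC_Q}=\cT^{x[1]}_{\cC_Q}\sqcup(\cT_{\cC_Q}\setminus\cT^{x[1]}_{\cC_Q})$. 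By the definition of $\leq^g_-$, the partial orders inside each of the two blocks coincide with the restriction of $\le$, so only the cross relations between some $T\in\cT^{x[1]}_{\cC_Q}$ and $T'\in\cT_{\cC_Q}\setminus\cT^{x[1]}_{\cC_Q}$ require attention. Lemma~\ref{l:cTxopen} forbids $T\le T'$ in $\cT_{\cC_Q}$ (it would force $T'\in\cT^{x[1]}_{\cC_Q}$), and the definition of $\leq^g_-$ admits no cross relation in that direction either, so these cases match trivially. It remains to show that $T'\le T$ in $\cT_{\cC_Q}$ is equivalent to $T'\le g(T)$: the forward implication is exactly Corollary~\ref{c:gTleuniq}, and the converse follows from $T'\le g(T)<T$ by Lemma~\ref{l:gTltT}.

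The main obstacle is conceptual rather than technical: one must carefully track which direction of inequality survives the construction, since mismatching the upward-closure of $\cT^{x[1]}_{\cC_Q}$ with the sign on $\le^g_{\pm}$ would give the wrong flip-flop partition. Once the bookkeeping is right, the argument is a straightforward transcription of the sink case.
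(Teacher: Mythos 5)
Your proposal is correct and follows essentially the same route as the paper, which proves this corollary by citing Lemmas~\ref{l:cTxopen}, \ref{l:gTltT} and Corollary~\ref{c:gTleuniq} and arguing exactly as in Corollary~\ref{c:ffclsink}; you have simply written out the details that the paper leaves implicit. The bookkeeping of the cross relations under $\leq^g_{-}$ and the use of Lemma~\ref{l:cTxopen} to exclude relations in the forbidden direction are exactly as intended.
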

\begin{proof}
The claim follows from Lemmas~\ref{l:cTxopen}, \ref{l:gTltT} and
Corollary~\ref{c:gTleuniq} as in the proof of
Corollary~\ref{c:ffclsink}.
\end{proof}

\section{The effect of a BGP reflection}
\label{sec:clBGP}

Let $Q$ be a quiver without oriented cycles and let $x$ be a sink. Let
$y_1, \dots, y_m$ be the endpoints of the arrows ending at $x$, and
denote by $Q'$ the quiver obtained from $Q$ by reflection at $x$. For a
vertex $y \in Q$, denote by $S_y$, $S'_y$ the simple modules
corresponding to $y$ in $\rep Q$, $\rep Q'$ and by $P_y$, $P'_y$ their
projective covers.

The categories $\rep Q$ and $\rep Q'$ are related by the BGP reflection
functors, introduced in~\cite{BGP73}. We recollect here the basic facts
on these functors that will be needed in the sequel.

The BGP reflection functors are the functors
\begin{align*}
\Fp : \rep Q \to \rep Q' && \Fm : \rep Q' \to \rep Q
\end{align*}
defined by
\begin{align} \label{e:FpFm}
(\Fp M)(x) &= \ker \Bigl(\bigoplus_{i=1}^{m} M(y_i) \to M(x) \Bigr) &&
(\Fp M)(y) = M(y)
\\ \notag
(\Fm M')(x) &= \coker \Bigl(M'(x) \to \bigoplus_{i=1}^{m} M'(y_i)
\Bigr) && (\Fm M')(y) = M'(y)
\end{align}
for $M \in \rep Q$, $M' \in \rep Q'$ and $y \in \Qx$, where the maps
$(\Fp M)(x) \to (\Fp M)(y_i)$ and $(\Fm M)(y_i) \to (\Fm M)(x)$ are
induced by the natural projection and inclusion.

It is clear that $\Fp$ is left exact and $\Fm$ is right exact. The
classical right derived functor of $\Fp$ takes the form
\begin{align} \label{e:R1Fp}
(R^1 \Fp M)(x) &= \coker \Bigl(\bigoplus_{i=1}^{m} M(y_i) \to M(x)
\Bigr) && (R^1 \Fp M)(y) = 0
\end{align}
hence $R^1 \Fp$ vanishes for modules not containing $S_x$ as direct
summand.

The total derived functors
\begin{align*}
R\Fp : \cD^b(Q) \to \cD^b(Q') && L\Fm : \cD^b(Q') \to \cD^b(Q)
\end{align*}
are triangulated equivalences, and their effect on the corresponding
cluster categories has been analyzed in~\cite{Zhu07}, where it is shown
that $R\Fp$ induces a triangulated equivalence $\cC_Q
\xrightarrow{\simeq} \cC_{Q'}$ whose action on the indecomposables of
$\cC_Q$ is given by
\begin{align} \label{e:CFpindec}
S_x \mapsto P'_x[1] && M \mapsto \Fp M && P_x[1] \mapsto S'_x && P_y[1]
\mapsto P'_y[1]
\end{align}
with an inverse given by
\begin{align} \label{e:CFmindec}
S'_x \mapsto P_x[1] && M' \mapsto \Fm M' && P'_x[1] \mapsto S_x &&
P'_y[1] \mapsto P_y[1]
\end{align}
for $M \neq S_x$, $M' \neq S'_x$ and $y \in \Qx$. Moreover, this
equivalence induces a bijection $\rho : \cT_{\cC_Q} \to \cT_{\cC_{Q'}}$
preserving the mutation graph~\cite[Proposition~3.2]{Zhu07}.

\begin{lemma} \label{l:rhorder}
Let $T, T' \in \cT_{\cC_Q}$. If $\rho(T) \leq \rho(T')$, then $T \leq
T'$.
\end{lemma}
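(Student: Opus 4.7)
The plan is to prove the lemma by unpacking the hypothesis $\fac \rho(T') \subseteq \fac \rho(T)$ in $\rep Q'$ and deriving $\fac T' \subseteq \fac T$ in $\rep Q$, dividing the analysis according to whether $T, T' \in \cT^x_{\cC_Q}$. A first observation is that by~\eqref{e:CFpindec}, the summand $P'_x[1]$ appears in $\rho(T)$ precisely when $S_x$ is a summand of $T$; hence $\rho$ restricts to a bijection $\cT^x_{\cC_Q} \to \cT^{x[1]}_{\cC_{Q'}}$, and likewise between the complements.

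Next, I would make explicit how $\wh{\rho(T)}$ depends on $\wh{T}$. Writing $\wh{T} = S_x^{\epsilon} \oplus V$ with $V$ having no $S_x$ summand, the module part of $\rho(T)$ is $\Fp V$, augmented by a further $S'_x$ summand when $T$ contains $P_x[1]$. The main technical input is to establish that for modules $V, V' \in \rep Q$ without any $S_x$ summand, the containment $V' \in \fac V$ is equivalent to $\Fp V' \in \fac \Fp V$ in $\rep Q'$. This should rely on the left exactness of $\Fp$, the fact that $\Fp$ restricts to a bijection between indecomposables of $\rep Q$ not isomorphic to $S_x$ and indecomposables of $\rep Q'$ not isomorphic to $S'_x$, and the natural short exact sequences relating $M$ to $\Fm\Fp M$ with cokernel supported on $S_x$, obtained from the adjunction $(\Fp, \Fm)$ together with~\eqref{e:R1Fp}.

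The hard part will be navigating the asymmetry between $S_x$ and $S'_x$, since $\Fp S_x = 0$ while $R^1 \Fp S_x = S'_x$, so these simples are not matched by $\Fp$ directly. The case analysis must carefully account for the contributions of $S_x$ to $\fac T$ (which occur precisely when $T \in \cT^x_{\cC_Q}$, by Corollary~\ref{c:cTxPx}) and of $S'_x$ to $\fac \rho(T)$ (which occur precisely when $\rho(T) \notin \cT^{x[1]}_{\cC_{Q'}}$). The closure properties from Corollary~\ref{c:cTxclose} and Lemma~\ref{l:cTxopen} applied to $Q'$ further constrain the admissible configurations of the memberships of $T, T'$ and $\rho(T), \rho(T')$, and combining these with the BGP bijection on the non-$S_x$ parts yields the required implication.
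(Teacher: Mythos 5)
Your overall strategy (transport the containment $\fac \rho(T') \subseteq \fac \rho(T)$ back to $\rep Q$ through the BGP functors) is reasonable, but two points need attention, one minor and one fatal. The minor one: the asserted equivalence ``$V' \in \fac V$ if and only if $\Fp V' \in \fac \Fp V$'' is false in the forward direction, because $\Fp$ is only left exact and does not preserve epimorphisms (already for $Q : 1 \to 2$ with sink $2$, the simple $S_1$ is a quotient of $P_1$, but $\Fp S_1$ has dimension vector $(1,1)$ while $\Fp P_1$ has dimension vector $(1,0)$, so $\Fp S_1 \notin \fac \Fp P_1$). Only the backward direction is needed here, and it does hold: apply the right exact $\Fm$ to a presentation $(\Fp V)^n \twoheadrightarrow \Fp V'$ and use $\Fm \Fp \cong \mathrm{id}$ on modules without $S_x$ summands. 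The paper's proof short-circuits all of this by never mentioning $\Fp$: it writes $\wh{T}$ as $\Fm \wh{\rho(T)}$ (possibly plus $S_x$) and applies the right exact $\Fm$ to a presentation in $\rep Q'$ in one line.

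The fatal one: your last step asserts that the closure properties of Corollary~\ref{c:cTxclose} and Lemma~\ref{l:cTxopen} rule out the troublesome mixed configurations and that the remaining cases ``yield the required implication''. They do not. Lemma~\ref{l:cTxopen} (upward closure of $\cT^{x[1]}_{\cC_{Q'}}$) excludes the configuration $\rho(T) \in \cT^{x[1]}_{\cC_{Q'}}$, $\rho(T') \notin \cT^{x[1]}_{\cC_{Q'}}$, but it does \emph{not} exclude the opposite one, namely $\rho(T') \in \cT^{x[1]}_{\cC_{Q'}}$ and $\rho(T) \notin \cT^{x[1]}_{\cC_{Q'}}$, i.e.\ $T' \in \cT^x_{\cC_Q}$ and $T \notin \cT^x_{\cC_Q}$; and Corollary~\ref{c:cTxclose} cannot be used to exclude it without already knowing $T \leq T'$, which is circular. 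In that configuration $S_x \in \fac T'$ while $S_x \notin \fac T$ (since $x$ is a sink and $\wh{T}$ has no summand $S_x$), so $T \not\leq T'$ and the statement genuinely fails. Concretely, take $T = \bigoplus_y P_y[1]$ and $T' = S_x \oplus \bigoplus_{y \neq x} P_y[1]$: then $\rho(T') = \bigoplus_y P'_y[1]$ is the maximum of $\cT_{\cC_{Q'}}$, so $\rho(T) \leq \rho(T')$, yet $\fac T' = \add S_x \not\subseteq \{0\} = \fac T$. So no case analysis can close; the lemma holds (and is only ever invoked, in Propositions~\ref{p:rhocTx} and~\ref{p:rhocTxcomp}) under the additional hypothesis that $T$ and $T'$ lie on the same side of the partition $\cT_{\cC_Q} = \cT^x_{\cC_Q} \sqcup (\cT_{\cC_Q} \setminus \cT^x_{\cC_Q})$. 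Add that hypothesis and your argument (or the paper's one-line use of $\Fm$) goes through; for what it is worth, the paper's own proof silently skips this same mixed case.
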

\begin{proof}
By~\eqref{e:CFmindec}, $\fac T = \fac \Fm \wh{\rho(T)}$ if $P'_x[1]$ is
not a summand of $\rho(T)$, and $\fac T = \fac (S_x \oplus \Fm
\wh{\rho(T)})$ if $P'_x[1]$ is a summand of $\rho(T)$. Note that by
Lemma~\ref{l:cTxopen}, the latter case implies that $P'_x[1]$ is also a
summand of $\rho(T')$, hence in any case it is enough to verify that if
$M, N \in \rep Q'$ satisfy $\fac N \subseteq \fac M$, then $\fac F^{-}N
\subseteq \fac F^{-}M$. Indeed, since $F^{-}$ is right exact, it takes
an exact sequence $M^n \to N \to 0$ to an exact sequence $(F^{-} M)^n
\to F^{-} N \to 0$.
\end{proof}

\begin{prop} \label{p:rhocTx}
$\rho$ induces an isomorphism of posets $\cT^x_{\cC_Q}
\xrightarrow{\simeq} \cT^{x[1]}_{\cC_{Q'}}$.
\end{prop}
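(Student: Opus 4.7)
My plan is to establish three things in order: (i) a bijection between the underlying sets of $\cT^x_{\cC_Q}$ and $\cT^{x[1]}_{\cC_{Q'}}$, (ii) that $\rho$ reflects the order on these subsets, and (iii) that $\rho$ also preserves it. Step (i) is immediate from \eqref{e:CFpindec}: a cluster tilting object $T$ contains $S_x$ as a summand if and only if $\rho(T)$ contains $\rho(S_x) = P'_x[1]$. Step (ii) is Lemma~\ref{l:rhorder}, so the substance of the argument lies in step (iii).

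For (iii), write $T = S_x \oplus U$ and let $\wh{U}$ denote the non-shifted-projective part of $U$. From \eqref{e:CFpindec} one reads off that $\wh{\rho(T)} = \Fp \wh U$, hence $\fac \rho(T) = \fac \Fp \wh U$ in $\rep Q'$, and similarly for $T'$. Combined with the disjoint-union description $\ind \fac T = \{S_x\} \amalg \ind \fac \wh U$ obtained in the proof of Lemma~\ref{l:facfT}, the comparison $T \leq T'$ is equivalent to $\fac \wh{U'} \subseteq \fac \wh{U}$ in $\rep Q$, while $\rho(T) \leq \rho(T')$ is equivalent to $\fac \Fp \wh{U'} \subseteq \fac \Fp \wh{U}$ in $\rep Q'$. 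The crucial additional observation is that for every $T \in \cT^x_{\cC_Q}$ the map $\bigoplus_i \wh U(y_i) \to \wh U(x)$ induced by the arrows $a_i : y_i \to x$ is an isomorphism: surjectivity is equivalent to $\wh U$ having no $S_x$ summand, which holds because $T$ is basic and $x$ is a sink, whereas injectivity follows from the reasoning in the proof of Lemma~\ref{l:cTxopen} applied to $\rho(T) \in \cT^{x[1]}_{\cC_{Q'}}$, which yields $x \not\in \supp \wh{\rho(T)} = \supp \Fp \wh U$, so by \eqref{e:FpFm} the kernel of $\bigoplus_i \wh U(y_i) \to \wh U(x)$ is exactly $(\Fp \wh U)(x) = 0$.

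The main obstacle is now to transport the torsion-class inclusion across $\Fp$. Given a surjection $\wh{U}^n \twoheadrightarrow \wh{U'}$ with kernel $K$ (which exists since $\wh{U'} \in \fac \wh U$), I would apply the snake lemma to the commutative diagram whose rows are $0 \to \bigoplus_i K(y_i) \to \bigoplus_i \wh{U}^n(y_i) \to \bigoplus_i \wh{U'}(y_i) \to 0$ and $0 \to K(x) \to \wh{U}^n(x) \to \wh{U'}(x) \to 0$, with vertical arrows induced by the $a_i$; since the vertical maps for $\wh{U}^n$ and $\wh{U'}$ are isomorphisms by the preceding step, the resulting long exact sequence forces the vertical map for $K$ to be surjective, so $K$ itself has no $S_x$ summand. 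Consequently $R^1 \Fp K = 0$ by \eqref{e:R1Fp}, and applying $\Fp$ to $0 \to K \to \wh{U}^n \to \wh{U'} \to 0$ yields a surjection $\Fp \wh{U}^n \twoheadrightarrow \Fp \wh{U'}$. Thus $\Fp \wh{U'} \in \fac \Fp \wh U$, giving $\fac \Fp \wh{U'} \subseteq \fac \Fp \wh{U}$ and completing the proof.
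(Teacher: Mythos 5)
Your proof is correct and follows the same overall route as the paper: the set bijection from~\eqref{e:CFpindec}, order reflection from Lemma~\ref{l:rhorder}, and, for order preservation, realizing $\wh{U'} \in \fac \wh{U}$ by a short exact sequence $0 \to K \to \wh{U}^n \to \wh{U'} \to 0$, proving $R^1\Fp K = 0$, and concluding from the resulting surjection $\Fp \wh{U}^n \twoheadrightarrow \Fp\wh{U'}$. The only genuine divergence is how you obtain $R^1\Fp K = 0$. The paper applies $\Hom_Q(-,S_x)$ to the sequence and uses $\Ext^1_Q(U',S_x)=0$ (available because $S_x \oplus U'$ sits inside a cluster tilting object) to see that $\Hom_Q(U^n,S_x) \to \Hom_Q(K,S_x)$ is surjective, whence $K$ has no $S_x$-summand. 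You instead first establish that $\bigoplus_i \wh{U}(y_i) \to \wh{U}(x)$ is an isomorphism for every $T \in \cT^x_{\cC_Q}$ --- surjectivity from basicness, injectivity from $(\Fp\wh{U})(x)=0$, which you read off from $\rho(T) \in \cT^{x[1]}_{\cC_{Q'}}$ via the first paragraph of Lemma~\ref{l:cTxopen}; this injectivity is precisely the linear-algebra translation of the Ext-vanishing the paper invokes --- and then run the snake lemma on the evaluation diagram to kill $\coker\bigl(\bigoplus_i K(y_i) \to K(x)\bigr)$. Both arguments are sound; yours is more elementary and isolates a pleasant structural fact about cluster tilting objects containing $S_x$ at a sink, at the cost of being longer than the paper's two-line homological argument. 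I see no gaps.
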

\begin{proof}
Note that by~\eqref{e:CFpindec}, $\rho(\cT^x_{\cC_Q}) =
\cT^{x[1]}_{\cC_{Q'}}$. In view of Lemma~\ref{l:rhorder}, it remains to
show that if $T, T' \in \cT^x_{\cC_Q}$ satisfy $T \leq T'$, then
$\rho(T) \leq \rho(T')$.

Write $\wh{T} = S_x \oplus U$ and $\wh{T'} = S_x \oplus U'$. Then $\fac
\rho(T) = \fac \Fp U$ and $\fac \rho(T') = \fac \Fp U'$, and we need to
show that $\Fp U' \in \fac \Fp U$.

Indeed, since $\fac T' \subseteq \fac T$, the proof of
Lemma~\ref{l:facfT} shows that $U' \in \fac U$, hence there exists a
short exact sequence
\[
0 \to K \to U^n \xrightarrow{\vphi} U' \to 0
\]
for some $n > 0$ and $K \in \rep Q$. Applying $\Hom_Q(-,S_x)$ to this
sequence, noting that $\Ext^1_Q(U', S_x) = 0$ since $T' \in
\cT^x_{\cC_Q}$, we get that $\Hom_Q(U^n, S_x) \to \Hom_Q(K, S_x)$ is
surjective, hence $K$ does not contain $S_x$ as summand (otherwise
$U^n$ would contain $S_x$ as summand). Therefore the exact sequence
\[
\Fp U^n \to \Fp U' \to R^1 \Fp K = 0
\]
shows that $\Fp U' \in \fac \Fp U$.
\end{proof}

\begin{prop} \label{p:rhocTxcomp}
$\rho$ induces an isomorphism of posets $\cT_{\cC_Q} \setminus
\cT^x_{\cC_Q} \xrightarrow{\simeq} \cT_{\cC_{Q'}} \setminus
\cT^{x[1]}_{\cC_{Q'}}$.
\end{prop}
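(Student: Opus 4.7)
My plan is to imitate the pattern of Proposition~\ref{p:rhocTx}: one direction of the poset isomorphism is supplied by Lemma~\ref{l:rhorder}, the other by a direct computation using $\Fp$. Bijectivity of the restricted map is automatic, since $\rho$ is a bijection $\cT_{\cC_Q} \to \cT_{\cC_{Q'}}$ coming from \eqref{e:CFpindec} that carries $\cT^x_{\cC_Q}$ onto $\cT^{x[1]}_{\cC_{Q'}}$ by Proposition~\ref{p:rhocTx}, so it restricts to a bijection of the complements. The order-reflecting direction $\rho(T) \leq \rho(T') \Rightarrow T \leq T'$ is immediate from Lemma~\ref{l:rhorder}, so the content is to establish $T \leq T' \Rightarrow \rho(T) \leq \rho(T')$ for $T, T' \in \cT_{\cC_Q} \setminus \cT^x_{\cC_Q}$.

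For this forward implication I first unpack $\wh{\rho(T)}$ using \eqref{e:CFpindec}: since $S_x$ is not a summand of $T$, the non-shifted-projective summands of $\rho(T)$ are $\Fp$ applied to the summands of $\wh T$, together with an $S'_x$ summand coming from a possible $P_x[1]$ summand of $T$. Hence
\[
\wh{\rho(T)} = \Fp \wh T \oplus (S'_x)^{\epsilon},
\]
where $\epsilon \in \{0,1\}$ is $1$ iff $P_x[1]$ is a summand of $T$, and similarly for $T'$. The inclusion $\fac T' \subseteq \fac T$ yields a short exact sequence $0 \to K \to \wh T^n \to \wh{T'} \to 0$ in $\rep Q$. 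Applying $\Fp$ and using that $R^1 \Fp \wh T^n = 0$ because $\wh T$ has no $S_x$ summand (see~\eqref{e:R1Fp}), the long exact sequence collapses to
\[
0 \to \img(\Fp \wh T^n \to \Fp \wh{T'}) \to \Fp \wh{T'} \to R^1 \Fp K \to 0,
\]
with $R^1 \Fp K$ a direct sum of copies of $S'_x$ by~\eqref{e:R1Fp}.

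The crucial additional ingredient is $S'_x \in \fac \rho(T)$. This follows by applying the source-analog of Corollary~\ref{c:cTxPx} established in Section~\ref{sec:clsource} (the lemma characterizing membership in $\cT^{x[1]}_{\cC_Q}$ at a source by the condition $S_x \not\in \fac T$) to $Q'$ with its source $x$: since $\rho(T) \notin \cT^{x[1]}_{\cC_{Q'}}$, we obtain $S'_x \in \fac \rho(T)$. The Ingalls--Thomas bijection further guarantees that $\fac \rho(T)$ is a finitely generated torsion class in $\rep Q'$, hence closed under extensions. In the displayed short exact sequence the image is a quotient of $(\Fp \wh T)^n$, so it lies in $\fac \Fp \wh T \subseteq \fac \rho(T)$, and $R^1 \Fp K$ is a power of $S'_x$, which also lies in $\fac \rho(T)$. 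Extension closure then forces $\Fp \wh{T'} \in \fac \rho(T)$, and since the possible $S'_x$ summand of $\wh{\rho(T')}$ is already in $\fac \rho(T)$, we conclude $\wh{\rho(T')} \in \fac \rho(T)$, i.e., $\rho(T) \leq \rho(T')$.

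The main obstacle, compared with Proposition~\ref{p:rhocTx}, is that we no longer have the vanishing $\Ext^1_Q(\wh{T'}, S_x)=0$ used there to force $K$ not to contain $S_x$ as a summand, so $R^1 \Fp K$ need not be zero. The argument succeeds precisely because the source-side membership criterion supplies $S'_x \in \fac \rho(T)$, and the torsion-class property of $\fac \rho(T)$ absorbs the residual $S'_x$-contribution arising from $R^1 \Fp$.
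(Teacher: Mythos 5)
Your proof is correct, and it takes a noticeably more uniform route than the paper's. The paper first reduces to the case $Q = \supp \wh{T} \cup \{x\}$ (via compatibility of $\Fp$ with restriction to subquivers) and then splits into two cases: when $x \in \supp \wh{T}$ it invokes tilting-module theory --- $\Fp T$ is a tilting module, $\Fp \wh{T'} \in (\Fp T)^{\perp} = \fac \Fp T$ by an external result from \cite{Ladkani07t}, and $S'_x \in \fac \Fp T$ because $\Fp T$ is sincere --- while when $x \notin \supp \wh{T}$ it runs essentially your computation (the short exact sequence, $R^1 \Fp K$ a power of $S'_x$, extension closure), with $S'_x$ available because it is literally a summand of $\rho(T)$. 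You instead observe that the single input the extension argument needs, namely $S'_x \in \fac \rho(T)$, follows in all cases from the source-side membership criterion of Section~\ref{sec:clsource} applied to $Q'$, since $\rho(T)$ lies outside $\cT^{x[1]}_{\cC_{Q'}}$. This collapses the case distinction, removes the need for the support reduction and for the citation of \cite[Corollary~4.3]{Ladkani07t}, at the cost of leaning on the Ingalls--Thomas fact that $\fac \rho(T)$ is a torsion class (which the paper also uses in its second case). All the individual steps check out: the identification $\wh{\rho(T)} = \Fp \wh{T} \oplus (S'_x)^{\epsilon}$ follows from \eqref{e:CFpindec} and additivity of $\Fp$, the vanishing $R^1 \Fp \wh{T}^n = 0$ holds because $\wh{T}$ has no $S_x$ summand, and bijectivity plus Lemma~\ref{l:rhorder} supply the remaining direction exactly as in Proposition~\ref{p:rhocTx}.
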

\begin{proof}
For a representation $M \in \rep Q$, let $Q_M$ and $Q'_M$ be the
subquivers of $Q$ and $Q'$ obtained by deleting the vertices outside
$\supp M \cup \{x\}$. The quivers $Q'_M$ and $Q_M$ are related via a
BGP reflection at $x$, and we denote by $F^{+}_{Q_M} : \rep Q_M \to
\rep Q'_M$ the corresponding reflection functor. The restriction
functors $i^{-1} : \rep Q \to \rep Q_M$ and $j^{-1} : \rep Q' \to \rep
Q'_M$ induced by the natural embeddings $i : Q_M \to Q$ and $j : Q'_M
\to Q'$ satisfy
\[
j^{-1} \Fp M = F^{+}_{Q_M} i^{-1} M ,
\]
as can be easily verified using~\eqref{e:FpFm}.

As in the proof of Proposition~\ref{p:rhocTx}, it is enough to show
that if $T, T' \in \cT_{\cC_Q} \setminus \cT^x_{\cC_Q}$ satisfy $T \leq
T'$, then $\rho(T) \leq \rho(T')$. In view of the preceding paragraph,
we may assume that $Q = \supp \wh{T} \cup \{x\}$.

We consider two cases. First, assume that $x \in \supp \wh{T}$. Then $T
= \wh{T}$ is a tilting module, $\rho(T) = \Fp T$ and $\fac \rho(T') =
\fac \Fp \wh{T'}$ or $\fac \rho(T') = \fac (S'_x \oplus \Fp \wh{T'})$
according to whether $x \in \supp \wh{T'}$ or not, hence it is enough
to show that $S'_x \oplus \Fp \wh{T'} \in \fac \Fp T$.

By assumption, $\wh{T'} \in \fac T = T^{\perp}$. Since $T$ does not
contain $S_x$ as summand, $\Fp T$ is a tilting module and $\Fp \wh{T'}
\in (\Fp T)^{\perp} = \fac \Fp T$ \cite[Corollary~4.3]{Ladkani07t}.
Moreover, $S'_x \in \fac \Fp T$, as $\Fp T$ is sincere.

For the second case, assume that $x \not \in \supp \wh{T}$. Then $T =
P_x[1] \oplus \wh{T}$ and by Lemma~\ref{l:cTxopen}, $T' = P_x[1] \oplus
\wh{T'} \oplus P[1]$ where $P$ is a sum of projectives other than
$P_x$. By~\eqref{e:CFpindec}, $\rho(T) = S'_x \oplus \Fp \wh{T}$ and
$\rho(T') = S'_x \oplus \Fp \wh{T'} \oplus P'[1]$, hence it is enough
to show that $\Fp \wh{T'} \in \fac (S'_x \oplus \Fp \wh{T})$.

Indeed, since $\wh{T'} \in \fac \wh{T}$, there exists a short exact
sequence
\[
0 \to K \to \wh{T}^n \to \wh{T'} \to 0
\]
for some $n > 0$ and $K \in \rep Q$. Applying the functor $\Fp$, noting
that $\wh{T}$ does not contain $S_x$ as summand, we get
\[
0 \to \Fp K \to \Fp \wh{T}^n \to \Fp \wh{T'} \to R^1 \Fp K \to R^1 \Fp
\wh{T}^n = 0 .
\]
By~\eqref{e:R1Fp}, $R^1 \Fp K = {S'_x}^{n'}$ for some $n' \geq 0$,
hence $\Fp \wh{T'}$ is an extension of ${S'_x}^{n'}$ with a quotient of
$\Fp \wh{T}^n$. The result now follows, as $\fac (S'_x \oplus \Fp
\wh{T})$ is closed under extensions.
\end{proof}

\begin{cor} \label{c:clcomm}
We have a commutative diagram
\[
\xymatrix{
\cT^x_{\cC_Q} \ar[r]^{\rho}_{\simeq} \ar[d]^{f} &
\cT^{x[1]}_{\cC_{Q'}} \ar[d]^{g} \\
\cT_{\cC_Q} \setminus \cT^x_{\cC_Q} \ar[r]^{\rho}_{\simeq} &
\cT_{\cC_{Q'}} \setminus \cT^{x[1]}_{\cC_{Q'}}
}
\]
\end{cor}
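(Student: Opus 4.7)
The plan is to reduce the commutativity of the square to the mutation-preserving property of $\rho$ from \cite[Proposition~3.2]{Zhu07}, together with the explicit description of $\rho$ on indecomposables in~\eqref{e:CFpindec}. Propositions~\ref{p:rhocTx} and~\ref{p:rhocTxcomp} have already shown that the two horizontal arrows are well-defined order isomorphisms, so the only content left is the identity $\rho(f(T)) = g(\rho(T))$ for every $T \in \cT^x_{\cC_Q}$.

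Given such a $T$, I would write $T = S_x \oplus U$. Since $T$ is basic, $U$ does not contain $S_x$ as a summand, so the action of $\rho$ on $U$ is given summand-wise by the cases of~\eqref{e:CFpindec} other than $S_x \mapsto P'_x[1]$, and $\rho$ being an additive equivalence yields a decomposition $\rho(T) = P'_x[1] \oplus \rho(U)$. By definition $f(T) = M \oplus U$, where $M$ is the unique indecomposable distinct from $S_x$ completing $U$ to a cluster tilting object of $\cC_Q$; symmetrically $g(\rho(T)) = M' \oplus \rho(U)$, with $M'$ the unique indecomposable distinct from $P'_x[1]$ completing $\rho(U)$ to a cluster tilting object of $\cC_{Q'}$. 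The triangulated equivalence $\rho$ sends cluster tilting objects to cluster tilting objects, so $\rho(f(T)) = \rho(M) \oplus \rho(U)$ is a cluster tilting object of $\cC_{Q'}$ sharing the almost-complete summand $\rho(U)$ with $\rho(T) = P'_x[1] \oplus \rho(U)$. Since $f(T) \neq T$ implies $\rho(f(T)) \neq \rho(T)$, and there are only two cluster tilting objects with that almost-complete summand, $\rho(f(T))$ must be $g(\rho(T))$.

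I do not expect any significant obstacle at this stage: the substantive work has been carried out in the previous three sections, and this corollary is essentially a bookkeeping step assembling those ingredients. The only item worth keeping an explicit eye on is that the uniqueness-of-complement characterization underlying both $f$ and $g$ transports correctly under the additive equivalence $\rho$, and this is immediate once one notes that $\rho(S_x) = P'_x[1]$ via~\eqref{e:CFpindec} and that $\rho$ is a bijection on isomorphism classes of cluster tilting objects.
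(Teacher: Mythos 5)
Your proposal is correct and follows essentially the same route as the paper: both reduce the corollary to the identity $\rho(f(T)) = g(\rho(T))$, using Propositions~\ref{p:rhocTx} and~\ref{p:rhocTxcomp} for the horizontal arrows and the fact that $\rho(S_x) = P'_x[1]$ from~\eqref{e:CFpindec}. The only (harmless) difference is that the paper simply cites \cite[Proposition~3.2]{Zhu07} for the fact that $\rho$ preserves the mutation graph, whereas you re-derive this compatibility from the two-complements property of almost complete cluster tilting objects.
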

\begin{proof}
By Propositions~\ref{p:rhocTx} and~\ref{p:rhocTxcomp}, $\rho$ induces
the two horizontal isomorphisms. For $T \in \cT^x_{\cC_Q}$, $f(T)$ is
defined as the mutation of $T$ with respect to $S_x$ and $g(\rho(T))$
is defined as the mutation of $\rho(T)$ with respect to $P'_x[1]$,
which is, by~\eqref{e:CFpindec}, the image of $S_x$ under the
triangulated equivalence $\cC_Q \to \cC_{Q'}$. Therefore the
commutativity of the diagram follows by the fact that $\rho$ preserves
the mutation graph~\cite[Proposition~3.2]{Zhu07}.
\end{proof}

\begin{theorem}
The posets $\cT_{\cC_Q}$ and $\cT_{\cC_{Q'}}$ are related via a
flip-flop.
\end{theorem}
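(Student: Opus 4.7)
The plan is to assemble the three main ingredients that have been established in Sections~\ref{sec:clsink}, \ref{sec:clsource} and \ref{sec:clBGP}, rather than to do any new work. The key observation is that all three results have been set up precisely so that their conclusions fit together into a flip-flop, with $\rho$ acting as the bridge.

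First, I would set $X = \cT^x_{\cC_Q}$, $Y = \cT_{\cC_Q} \setminus \cT^x_{\cC_Q}$, and take $f : X \to Y$ to be the mutation at $S_x$ as defined in Section~\ref{sec:clsink}. Since $x$ is a sink of $Q$, Corollary~\ref{c:ffclsink} then gives directly that
\[
\cT_{\cC_Q} \simeq \bigl( X \sqcup Y, \leq^f_{+} \bigr).
\]
Next, since $x$ is a sink of $Q$, the vertex $x$ becomes a source of $Q'$, so Corollary~\ref{c:ffclsource} applies to $Q'$ at $x$ and yields
\[
\cT_{\cC_{Q'}} \simeq \bigl( \cT^{x[1]}_{\cC_{Q'}} \sqcup (\cT_{\cC_{Q'}} \setminus \cT^{x[1]}_{\cC_{Q'}}), \leq^g_{-} \bigr),
\]
where $g$ is the mutation at $P'_x[1]$.

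The second step is to identify the two sets $\cT^{x[1]}_{\cC_{Q'}}$ and $\cT_{\cC_{Q'}} \setminus \cT^{x[1]}_{\cC_{Q'}}$ with $X$ and $Y$ respectively via the horizontal maps of Corollary~\ref{c:clcomm}. Because that diagram commutes and its horizontal arrows are isomorphisms of posets, the map $g$ transports back to $f$ under these identifications. Consequently,
\[
\cT_{\cC_{Q'}} \simeq \bigl( X \sqcup Y, \leq^f_{-} \bigr).
\]

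The two displayed isomorphisms show that $Z = \cT_{\cC_Q}$ and $Z' = \cT_{\cC_{Q'}}$ are precisely the two posets arising from the flip-flop construction applied to $X$, $Y$ and $f$, which is exactly the definition of being related by a flip-flop. There is no genuine obstacle left; the only point that requires a moment's care is checking that the partial order $\leq^g_{-}$ on $\cT^{x[1]}_{\cC_{Q'}} \sqcup (\cT_{\cC_{Q'}} \setminus \cT^{x[1]}_{\cC_{Q'}})$ really pulls back to $\leq^f_{-}$ on $X \sqcup Y$ under $\rho$, but this is immediate from the commutativity of the square in Corollary~\ref{c:clcomm} together with the fact that both horizontal maps are order isomorphisms.
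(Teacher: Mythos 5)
Your proposal is correct and follows exactly the paper's own argument: the paper's proof is the one-line instruction to combine Corollaries~\ref{c:ffclsink}, \ref{c:ffclsource} and~\ref{c:clcomm}, which is precisely what you spell out. The only difference is that you make explicit the transport of $\leq^g_{-}$ back to $\leq^f_{-}$ along $\rho$, which the paper leaves implicit.
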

\begin{proof}
Use Corollaries~\ref{c:ffclsink}, \ref{c:ffclsource}
and~\ref{c:clcomm}.
\end{proof}

\section{Example}
\label{sec:clexample}

Consider the following two quivers $Q$ and $Q'$ whose underlying graph
is the Dynkin diagram $A_3$. The quiver $Q'$ is obtained from $Q$ by a
BGP reflection at the sink $3$.
\begin{align*}
Q : \xymatrix{
{\bullet_1} \ar[r] & {\bullet_2} \ar[r] & {\bullet_3}
} &&
Q' : \xymatrix{
{\bullet_1} \ar[r] & {\bullet_2} & {\bullet_3} \ar[l]
}
\end{align*}

We denote the indecomposables of the cluster categories $\cC_Q$ and
$\cC_{Q'}$ by specifying their dimension vectors. These consist of the
positive roots of $A_3$, which correspond to the indecomposable
representations of the quivers, together with the negative simple roots
$-e_1, -e_2, -e_3$ which correspond to the shifted projectives.

\begin{figure}
\[
\xymatrix@=1.5pc{
& &
{\begin{smallmatrix} 
0 & 1 & 0 \\ 0 & 1 & 1 \\ 1 & 1 & 1
\end{smallmatrix}} \ar[d] \ar[dddrrr] \\
& & {\begin{smallmatrix} 
0 & 1 & 0 \\ 1 & 1 & 0 \\ 1 & 1 & 1
\end{smallmatrix}} \ar[dl] \ar[dr] \\
{\begin{smallmatrix} 
\mathbf{0} & \mathbf{0} & \mathbf{1} \\
\mathbf{0} & \mathbf{1} & \mathbf{1} \\
\mathbf{1} & \mathbf{1} & \mathbf{1}
\end{smallmatrix}} \ar[uurr] \pmb{\ar[d]} \pmb{\ar[dddrrr]} &
{\begin{smallmatrix} 
1 & 0 & 0 \\ 1 & 1 & 0 \\ 1 & 1 & 1
\end{smallmatrix}} \ar[dr] & &
{\begin{smallmatrix} 
0 & 1 & 0 \\ 1 & 1 & 0 \\ 0 & 0 & -1
\end{smallmatrix}} \ar[dl] \ar[ddrr] \\
{\begin{smallmatrix} 
\mathbf{0} & \mathbf{0} & \mathbf{1} \\
\mathbf{1} & \mathbf{0} & \mathbf{0} \\
\mathbf{1} & \mathbf{1} & \mathbf{1}
\end{smallmatrix}} \ar[ur] \pmb{\ar[ddrr]} & &
{\begin{smallmatrix} 
1 & 0 & 0 \\ 1 & 1 & 0 \\ 0 & 0 & -1
\end{smallmatrix}} \ar[dr] & & &
{\begin{smallmatrix} 
0 & 1 & 0 \\ 0 & 1 & 1 \\ -1 & 0 & 0
\end{smallmatrix}} \ar[d] \\
& & &
{\begin{smallmatrix} 
1 & 0 & 0 \\ 0 & -1 & 0 \\ 0 & 0 & -1
\end{smallmatrix}} \ar[dr] & &
{\begin{smallmatrix} 
0 & 1 & 0 \\ 0 & 0 & -1 \\ -1 & 0 & 0
\end{smallmatrix}} \ar[dl] \\
& &
{\begin{smallmatrix} 
\mathbf{0} & \mathbf{0} & \mathbf{1} \\
\mathbf{1} & \mathbf{0} & \mathbf{0} \\
\mathbf{0} & \mathbf{-1} & \mathbf{0}
\end{smallmatrix}} \ar[ur] \pmb{\ar[dr]} &
{\begin{smallmatrix} 
\mathbf{0} & \mathbf{0} & \mathbf{1} \\
\mathbf{0} & \mathbf{1} & \mathbf{1} \\
\mathbf{-1} & \mathbf{0} & \mathbf{0}
\end{smallmatrix}} \ar[uurr] \pmb{\ar[d]} &
{\begin{smallmatrix} 
-1 & 0 & 0 \\ 0 & -1 & 0 \\ 0 & 0 & -1
\end{smallmatrix}}  \\
& & &
{\begin{smallmatrix} 
\mathbf{0} & \mathbf{0} & \mathbf{1} \\
\mathbf{0} & \mathbf{-1} & \mathbf{0} \\
\mathbf{-1} & \mathbf{0} & \mathbf{0}
\end{smallmatrix}} \ar[ur]
}
\]

\[
\xymatrix@=1.5pc{
& &
{\begin{smallmatrix} 
0 & 1 & 1 \\ 0 & 1 & 0 \\ 1 & 1 & 0
\end{smallmatrix}} \ar[ddll] \ar[d] \ar[dddrrr] \\
& &
{\begin{smallmatrix} 
0 & 1 & 1 \\ 1 & 1 & 1 \\ 1 & 1 & 0
\end{smallmatrix}} \ar[dl] \ar[dr] \\
{\begin{smallmatrix} 
\mathbf{0} & \mathbf{0} & \mathbf{-1} \\
\mathbf{0} & \mathbf{1} & \mathbf{0} \\
\mathbf{1} & \mathbf{1} & \mathbf{0}
\end{smallmatrix}} \pmb{\ar[d]} \pmb{\ar[dddrrr]} &
{\begin{smallmatrix} 
1 & 0 & 0 \\ 1 & 1 & 1 \\ 1 & 1 & 0
\end{smallmatrix}} \ar[dl] \ar[dr]
& &
{\begin{smallmatrix} 
0 & 1 & 1 \\ 1 & 1 & 1 \\ 0 & 0 & 1
\end{smallmatrix}} \ar[dl] \ar[ddrr] \\
{\begin{smallmatrix} 
\mathbf{0} & \mathbf{0} & \mathbf{-1} \\
\mathbf{1} & \mathbf{0} & \mathbf{0} \\
\mathbf{1} & \mathbf{1} & \mathbf{0}
\end{smallmatrix}} \pmb{\ar[ddrr]} & &
{\begin{smallmatrix} 
1 & 0 & 0 \\ 1 & 1 & 1 \\ 0 & 0 & 1
\end{smallmatrix}} \ar[dr] & & &
{\begin{smallmatrix} 
0 & 1 & 1 \\ 0 & 1 & 0 \\ -1 & 0 & 0
\end{smallmatrix}} \ar[ddll] \ar[d] \\
& & &
{\begin{smallmatrix} 
1 & 0 & 0 \\ 0 & -1 & 0 \\ 0 & 0 & 1
\end{smallmatrix}} \ar[dl] \ar[dr] & &
{\begin{smallmatrix} 
0 & 1 & 1 \\ 0 & 0 & 1 \\ -1 & 0 & 0
\end{smallmatrix}} \ar[dl] \\
& &
{\begin{smallmatrix} 
\mathbf{0} & \mathbf{0} & \mathbf{-1} \\
\mathbf{1} & \mathbf{0} & \mathbf{0} \\
\mathbf{0} & \mathbf{-1} & \mathbf{0}
\end{smallmatrix}} \pmb{\ar[dr]} &
{\begin{smallmatrix} 
\mathbf{0} & \mathbf{0} & \mathbf{-1} \\
\mathbf{0} & \mathbf{1} & \mathbf{0} \\
\mathbf{-1} & \mathbf{0} & \mathbf{0}
\end{smallmatrix}} \pmb{\ar[d]} &
{\begin{smallmatrix} 
-1 & 0 & 0 \\ 0 & -1 & 0 \\ 0 & 0 & 1
\end{smallmatrix}} \ar[dl] \\
& & &
{\begin{smallmatrix} 
\mathbf{0} & \mathbf{0} & \mathbf{-1} \\
\mathbf{0} & \mathbf{-1} & \mathbf{0} \\
\mathbf{-1} & \mathbf{0} & \mathbf{0}
\end{smallmatrix}}
}
\]
\caption{Hasse diagrams of the posets $\cT_{\cC_Q}$ (top) and
$\cT_{\cC_{Q'}}$ (bottom).} \label{fig:cltilting}
\end{figure}

Figure~\ref{fig:cltilting} shows the Hasse diagrams of the posets
$\cT_{\cC_Q}$ and $\cT_{\cC_{Q'}}$, where we used bold font to indicate
the subsets $\cT^3_{\cC_Q}$ and $\cT^{3[1]}_{\cC_{Q'}}$ of cluster
tilting objects containing the simple $S_3$ and the shifted projective
$P'_3[1]$ as summand, respectively.

The posets $\cT_{\cC_Q}$ and $\cT_{\cC_{Q'}}$ are Cambrian lattices,
and can be realized as sublattices of the weak order on the group of
permutations on $4$ letters, see~\cite[Section~6]{Reading06}. Moreover,
the underlying graph of their Hasse diagrams is the $1$-skeleton of the
three-dimensional Stasheff associhedron. $\cT_{\cC_Q}$ is a Tamari
lattice, corresponding to the linear orientation on $A_3$.

The BGP reflection at the vertex $3$, whose action on the dimension
vectors is given by
\[
v \mapsto \begin{cases} v & \text{if $v \in \{-e_1, -e_2\}$} \\
s_3(v) & \text{otherwise}
\end{cases}
\]
where $s_3$ is the linear transformation specified by
\[
s_3(v) = v \cdot
\begin{pmatrix}
1 & 0 & 0 \\
0 & 1 & 1 \\
0 & 0 & -1 \\
\end{pmatrix} ,
\]
induces isomorphisms $\cT^3_{\cC_Q} \xrightarrow{\simeq}
\cT^{3[1]}_{\cC_{Q'}}$ and $\cT_{\cC_Q} \setminus \cT^3_{\cC_Q}
\xrightarrow{\simeq} \cT_{\cC_{Q'}} \setminus \cT^{3[1]}_{\cC_{Q'}}$
compatible with the mutations at $S_3$ and $P'_3[1]$.

\clearpage



\providecommand{\bysame}{\leavevmode\hbox to3em{\hrulefill}\thinspace}
\providecommand{\MR}{\relax\ifhmode\unskip\space\fi MR }
\providecommand{\MRhref}[2]{%
  \href{http://www.ams.org/mathscinet-getitem?mr=#1}{#2}
} \providecommand{\href}[2]{#2}

\end{document}